\DeclareSymbolFont{cyrletters}{OT2}{wncyr}{m}{n}
\DeclareMathSymbol{\Sha}{\mathalpha}{cyrletters}{"58}
\theoremstyle{definition}
\newtheorem{theorem}{Theorem}[section]
\newtheorem{lemma}[theorem]{Lemma}
\newtheorem*{theorem*}{Theorem}
\newtheorem*{corollary*}{Corollary}
\newtheorem*{assumption*}{Assumption}
\newtheorem{proposition}[theorem]{Proposition}
\theoremstyle{definition}
\newtheorem{definition}[theorem]{Definition}
\newtheorem{conjecture}[theorem]{Conjecture}
\newtheorem*{conjecture*}{Conjecture}
\newtheorem{example}[theorem]{Example}
\newtheorem{remark}[theorem]{Remark}
\newtheorem*{remark*}{Remark}
\g@addto@macro\bfseries{\boldmath} % This makes math in section titles bold.
\newcommand{\A}{\mathbb{A}}
\newcommand{\Cc}{\mathbb{C}}
\newcommand{\Pp}{\mathbb{P}}
\newcommand{\pp}{\mathbb{P}^2}
\newcommand{\ppp}{\mathbb{P}^3}
\newcommand{\Q}{\mathbb{Q}}
\newcommand{\R}{\mathbb{R}}
\newcommand{\Z}{\mathbb{Z}}
\newcommand{\Qbar}{{\overline{\Q}}}
\newcommand{\xg}{\mathfrak{g}}
\newcommand{\xp}{\mathfrak{p}}
\newcommand{\calP}{\mathcal{P}}
\DeclareMathOperator{\Bl}{Bl}
\DeclareMathOperator{\CH}{CH}
\DeclareMathOperator{\Ext}{Ext}
\DeclareMathOperator{\Gr}{Gr}
\DeclareMathOperator{\Jac}{Jac}
\DeclareMathOperator{\Pic}{Pic}
\DeclareMathOperator{\Nm}{Nm}
\renewcommand{\H}{{\operatorname{H}}}
\newcommand{\dd}{\mathrm{d}}
\newcommand{\isom}{\simeq}
\newcommand{\norm}[1]{\lVert #1 \rVert}
\newcommand{\inv}{^{-1}}
\newcommand{\ii}{{\texttt i}}
\newcommand{\tpi}{2\pi{\texttt i}}
\newcommand{\tpip}{(2\pi{\texttt i})}
\DeclareMathOperator{\id}{id}
\DeclareMathOperator{\hgt}{ht}
\newcommand{\LMHS}{L}
\newcommand{\cc}{\mathcal{C}}
\newcommand{\cd}{\mathcal{D}}
\newcommand{\ce}{\mathcal{E}}
\newcommand{\co}{\mathcal{O}}
\newcommand{\cx}{\mathcal{X}}
\newcommand{\toi}{\hookrightarrow}
\newcommand{\isoto}{\overset{\sim}{\to}}
\newcommand{\coleq}{\colonequals}
\renewcommand{\Re}{\operatorname{Re}}
\renewcommand{\Im}{\operatorname{Im}}
\newcommand{\Ker}{\operatorname{Ker}}
\title{Computing heights via limits of Hodge structures}
\author[S.~Bloch]{Spencer Bloch}
\address{Spencer Bloch, University of Chicago, 5801 S Ellis Ave, Chicago, IL 60637, United States}
\email{bloch@math.uchicago.edu }
\author[R.~de~Jong]{Robin de Jong}
\address{Robin de Jong,  Universiteit Leiden, Niels Bohrweg 1, 2333 CA Leiden, The Netherlands}
\email{rdejong@math.leidenuniv.nl}
\author[E.~C.~Sert\"oz]{Emre Can Sert\"oz}
\address{Emre Can Sert\"oz, Institut f\"ur Algebraische Geometrie, Leibniz Universit\"at Hannover, 
Welfengarten 1, 30167 Hannover, Germany}
\email{emre@sertoz.com}
\date{\today}
\subjclass[2010]{\href{https://mathscinet.ams.org/msc/msc2010.html?t=11G50}{11G50},
\href{https://mathscinet.ams.org/msc/msc2010.html?t=14D06}{14D06},
\href{https://mathscinet.ams.org/msc/msc2010.html?t=14D07}{14D07},
\href{https://mathscinet.ams.org/msc/msc2010.html?t=14J20}{14J20},
\href{https://mathscinet.ams.org/msc/msc2010.html?t=14J70}{14J70},
\href{https://mathscinet.ams.org/msc/msc2010.html?t=14Q15}{14Q15},
\href{https://mathscinet.ams.org/msc/msc2010.html?t=14Q20}{14Q20}}
\keywords{Beilinson--Bloch pairing, biextension, height, limit mixed Hodge structure, N\'eron--Tate pairing, nodal singularity, period}
\begin{document}
\begin{abstract} 
We consider the problem of explicitly computing Beilinson--Bloch heights of homologically trivial cycles on varieties defined over number fields. Recent results have established a congruence, up to the rational span of logarithms of primes, between the height of certain limit mixed Hodge structures and certain Beilinson--Bloch heights obtained from odd-dimensional hypersurfaces with a node. This congruence suggests a new method to compute Beilinson--Bloch heights. Here we explain how to compute the relevant limit mixed Hodge structures in practice, then apply our computational method to a nodal quartic curve and a nodal cubic threefold. In both cases we explain the nature of the primes occurring in the congruence.
\end{abstract}

\maketitle

\section{Introduction}

Let $Y$ be a smooth projective variety of dimension $n=2p-1$ defined over a number field~$K$.
When $Z$ is an algebraic cycle on $Y$ of codimension~$p$ with trivial class in homology, we have the Beilinson--Bloch  height  $\hgt(Z) \in \R$ associated to $Z$, at least under standard assumptions~\cite{Bei1987, Bloch1984}, see Definition~\ref{def:bb_height}.

When $Y$ is a \emph{curve}, i.e., $n=1$, the cycle $Z$ is a degree-zero divisor on $Y$. The height $\hgt(Z)$ always exists and coincides with the non-normalized N\'eron--Tate height~\cite{Gross1986, Neron1965} of the point $[Z] \in \Jac(Y)$ determined by $Z$ on the Jacobian of~$Y$. The recent work~\cite{vBHM2020} by Van Bommel, Holmes and M\"uller gives an effective method for computing  $\hgt(Z)$ in this setting.

As far as we know, no general technique exists that is able to handle height computations in higher dimensions. In this article we will consider a setting which, in light of recent results \cite{Beil2022, BdJS2022}, becomes accessible to computations. One goal of this article is to introduce a computational technique to carry out the strategy (Section~\ref{sec:how_to_compute_lmhs}) and to demonstrate it on a three dimensional variety (Section~\ref{sec:higher_dim_example}). 

\subsection{A class of cycles obtained from nodal hypersurfaces} \label{sec:nodal_hyp}

Let $X_0 \subset \Pp^{n+1}$ be a geometrically integral projective hypersurface of dimension $n=2p-1$ defined over a number field $K$ and with a single ordinary double point. We assume $X_0$ has degree at least three to control its topology, see below. 

Let $Y$ denote the proper transform of $X_0$ under the blow-up of $\Pp^{n+1}$ at the node of $X_0$, and let $Q \subset Y$ be the exceptional quadric. This quadric is smooth and even-dimensional; we let $\Lambda_1, \Lambda_2$ denote the two rulings of $Q$. The degree constraint on $X_0$ forces that the difference $Z = \Lambda_1 - \Lambda_2$ of the two rulings is a homologically trivial cycle of codimension~$p$ on $Y$, see, e.g., \cite[Theorem~2.1]{Schoen1985}.

In this article, we will consider the Beilinson--Bloch height $\hgt(Z)$ of $Z$ on $Y$. 
Note that this real number exists under standard assumptions on $Z$ and $Y$, and could be viewed as a canonical height attached to the \emph{singular} variety $X_0$.
Recent work has related the height $\hgt(Z)$ to limiting periods associated to any smoothing deformation of~$X_0$ as we explain below. 

\subsection{Relation to limiting periods} \label{subsec:conj}

Consider a flat family of projective hypersurfaces $\pi \colon X \to S$, where $X$ is smooth, $S$ is a quasi-projective smooth curve over~$K$ with a base point $0 \in S(K)$, the family is smooth over $S \setminus \{0\}$, and the central fiber $\pi\inv(0)$ is identified with our nodal hypersurface $X_0$. We call the family $X/S$ a \emph{smoothing deformation} of $X_0$.

Let $\chi \in \Omega_{S,0} \simeq K$ be a non-zero cotangent vector. When $\sigma \colon K \toi \Cc$ is a complex embedding, we obtain an associated complex deformation $X_\sigma$ from $X$ by extending scalars. The cotangent vector $\chi$ and the complex deformation $X_\sigma$ together determine a limit mixed Hodge structure $\LMHS_{\chi,\sigma}$. It is a (twisted) biextension, as follows from Picard--Lefschetz theory, and it therefore has an associated biextension height $\hgt(\LMHS_{\chi,\sigma})$ in the sense of Hain \cite{Hain1990}. We define $\hgt(\LMHS_\chi) \coleq \sum_\sigma \hgt(\LMHS_{\chi,\sigma})$, where the sum ranges over all complex embeddings of~$K$. It is not difficult to show that the equivalence class $\hgt(\LMHS_\chi) \bmod \log |\Q^\times|$ is independent of the choice of $\chi$ and, in fact, of the deformation $X/S$.

With these notations, the first named author has recently made the following conjecture.
\begin{conjecture} \label{conj:bloch} The real numbers $\hgt(Z)$ and $\hgt(\LMHS_{\chi})$ differ by an element of $\Q \cdot \log |\Q^\times|$. 
\end{conjecture}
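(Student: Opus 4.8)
The plan is to decompose both real numbers into sums of local terms indexed by the places $v$ of $K$ and then to compare them place by place, arranging that every discrepancy is a rational multiple of $\log p$ for a prime $p$. Granting the standard assumptions under which $\hgt(Z)$ exists, one writes $\hgt(Z)=\langle Z,Z\rangle=\sum_v\langle Z,Z\rangle_v$, where $\langle\,,\,\rangle_v$ is the local Beilinson--Bloch height pairing at $v$ (concretely, realise $\hgt(Z)$ as an arithmetic self-intersection on a regular model of $Y$ over $\mathcal{O}_K$, taken after a finite base extension if necessary, with a choice of Green currents at the archimedean places). By definition $\hgt(\LMHS_\chi)=\sum_\sigma\hgt(\LMHS_{\chi,\sigma})$, the sum over the archimedean places. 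Thus it suffices to prove: (i) $\langle Z,Z\rangle_v\in\Q\cdot\log|\Q^\times|$ for every finite place $v$; and (ii) $\sum_\sigma\langle Z,Z\rangle_\sigma\equiv\sum_\sigma\hgt(\LMHS_{\chi,\sigma})\pmod{\Q\cdot\log|\Q^\times|}$.

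Part (i) should be routine. At a finite place $v$ above a rational prime $p$, the local pairing $\langle Z,Z\rangle_v$ equals an intersection number on the special fibre of the model (plus the rational coefficients of the necessary vertical correction terms) multiplied by $\log\#k_v$. Since $\log\#k_v=f_v\log p$, we get $\langle Z,Z\rangle_v\in\Q\cdot\log p$; summing over all finite places, the total lies in $\Q\cdot\log|\Q^\times|$, so it may be discarded.

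Part (ii) is the crux. Fix an archimedean place $\sigma$. Following Hain, one interprets $\langle Z,Z\rangle_\sigma$ as the biextension height of the Beilinson--Bloch biextension of mixed Hodge structures attached to the homologically trivial cycle $Z$ (paired with itself) on the smooth projective variety $Y_\sigma$: its outer graded pieces are of Tate type and its middle graded piece carries the relevant part of $\H^{2p-1}(Y_\sigma)$, while the extension data record the Abel--Jacobi class of $Z$. The comparison to establish is that this biextension coincides, up to a Tate twist and a $\Cc^\times$-rescaling of the extension data that depends on $\chi$, with the limiting biextension $\LMHS_{\chi,\sigma}$ of the smoothing $X/S$. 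The geometric input is the local Picard--Lefschetz picture at the node: the vanishing cycle of the degeneration corresponds, through the geometry of the blow-up, to the difference $[\Lambda_1]-[\Lambda_2]$ of the rulings of the exceptional quadric $Q\subset Y$ (this is closely tied to the homological triviality of $Z$ on $Y$), and the Clemens--Schmid exact sequence identifies the weight graded pieces of $\LMHS_{\chi,\sigma}$ with the corresponding pieces of $\H^{2p-1}(Y_\sigma)$; under these identifications the two biextension structures should agree. The cotangent vector $\chi$ enters only through the induced local parameter on $S$, which rescales the extension by a nonzero complex number; its total contribution over all $\sigma$ is $\sum_\sigma\log|\chi|_\sigma$, and since $\chi$ corresponds to an element of $K^\times$ under $\Omega_{S,0}\simeq K$, the product formula gives $\sum_\sigma\log|\chi|_\sigma=-\sum_{v\nmid\infty}\log|\chi|_v\in\Q\cdot\log|\Q^\times|$. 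Summing (ii) over $\sigma$ and combining with (i) yields the conjecture.

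The hard part is the biextension identification in (ii): one must show that the mixed Hodge structure extracted from the \emph{limiting} periods of the smooth fibres $X_t$ genuinely computes the archimedean Beilinson--Bloch local pairing of $Z$ on the \emph{fixed} resolved variety $Y$, rather than some a priori unrelated invariant. This demands a precise compatibility between Hain's archimedean biextension height and the nearby-cycle degeneration, together with careful bookkeeping of how $Z$, the vanishing thimble and the quadric $Q$ sit inside the Clemens--Schmid formalism; one must also check that any residual correction to the $\chi$-dependence coming from the local geometry at the node is again a rational combination of logarithms, and hence absorbed into the allowed ambiguity. This comparison is, in essence, exactly what the recent results of \cite{Beil2022, BdJS2022} provide, so once the decomposition above is in place the proof reduces to invoking them.
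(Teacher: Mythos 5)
There is a genuine gap, and it is worth being precise about what kind. The statement you are addressing is stated in the paper as a \emph{conjecture}: the paper itself gives no proof of it in general, only (a) the dimension-one case with an explicit error term (Theorem~\ref{thm:conj_curves}, imported from \cite{BdJS2022}), (b) a reduction of the threefold case to a N\'eron--Tate height on a curve (Theorem~\ref{thm:threefold_to_curve}) used for numerical verification, and (c) a pointer to Beilinson's \emph{announced} proof \cite{Beil2022}. Your proposal does not close this gap: after the (reasonable) decomposition into local terms, your part (i) is indeed routine modulo the standard assumptions needed for the finite local pairings to exist, but your part (ii) --- the identification, at each archimedean place, of the Beilinson--Bloch archimedean local pairing of $Z$ on the fixed resolution $Y$ with Hain's biextension height of the limit mixed Hodge structure $\LMHS_{\chi,\sigma}$ of the smoothing, up to controlled $\chi$-dependent rescaling --- \emph{is} the conjecture, essentially in its entirety. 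Your handling of it is to sketch the expected Picard--Lefschetz/Clemens--Schmid compatibilities and then write that the comparison ``is, in essence, exactly what the recent results of \cite{Beil2022, BdJS2022} provide.'' But \cite{Beil2022} is an announced proof of this very statement (so invoking it is circular as a proof strategy), and \cite{BdJS2022} treats only $n=1$; neither supplies the general nearby-cycles-versus-cycle-class biextension comparison your argument needs.

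Two further points to keep in mind if you pursue this route. First, the self-pairing $\langle Z,Z\rangle$ cannot be decomposed into local terms without first moving $Z$ to a cycle with disjoint support (or working with Green currents on a regular model after a finite extension); the individual archimedean terms then depend on these choices, and only the total is well defined modulo the allowed ambiguity, so the place-by-place comparison in (ii) must be formulated with care. Second, your product-formula treatment of the $\chi$-dependence presupposes a trivialization $\Omega_{S,0}\simeq K$; this is fine for showing independence of $\chi$ modulo $\Q\cdot\log|\Q^\times|$ (which the paper already notes), but it does not by itself control the residual archimedean-versus-degeneration discrepancy, which is exactly where the unknown error term lives --- indeed the paper's dimension-one formula \eqref{eq:conj_curves} shows this error term involves genuinely arithmetic data ($\log\norm{\chi}$, $(\overline{p}\cdot\overline{q})_{\mathrm{fin}}$, $\Phi$) that your sketch does not produce.
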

Let us refer to the difference $\hgt(Z)-\hgt(\LMHS_{\chi})$ in $\Q \cdot \log|\Q^\times|$ as the \emph{error term}.
Beilinson recently announced a proof of Conjecture~\ref{conj:bloch}, see~\cite{Beil2022}. His approach does not appear to give an explicit description of the error term in $\Q \cdot \log |\Q^\times|$. The error term is required to compute the height $\hgt(Z)$ explicitly using the limiting periods. We have given an explicit formula for the error term in dimension one~\cite{BdJS2022}, and a future work by the authors is aimed at describing the error term in all dimensions. 

\subsection{Aim of this paper}

In this article, we will demonstrate how to compute a numerical approximation of the real number $\hgt(\LMHS_{\chi})$ appearing in  Conjecture~\ref{conj:bloch}. 

After discussing an example dealing with a nodal plane curve, the case when $X_0$ is a nodal cubic \emph{threefold} will be the focus of this paper. In this setting, we prove that $\hgt(Z)$ is equal to the N\'eron--Tate height of a divisor on an associated sextic space curve. Combining this result with our method to compute $\hgt(\LMHS_{\chi})$ allows us to study the error term in Conjecture~\ref{conj:bloch}. In this case, we can give a geometric explanation for the primes supporting the error term.

Our result that allows us to reduce the determination of $\hgt(Z)$ to a height computation on a curve is as follows. 
Suppose that the nodal cubic threefold $X_0 \subset \Pp^4$ is such that the resolution $Y$ of $X_0$ has a proper regular model over the ring of integers of~$K$. In this case, the height $\hgt(Z)$ exists unconditionally. The moduli of lines contained in $X_0$ and passing through the node of $X_0$ is a smooth sextic curve $C \subset \ppp$. Let $\xg$ denote the difference of the two trigonal pencils on $C$. We denote by $\hgt(\xg)$ the non-normalized N\'eron--Tate height of the class of $\xg$ in the Jacobian of $C$.

\begin{theorem*}[Theorem~\ref{thm:threefold_to_curve}]
 With the set-up above, and in particular, assuming a regular model for our threefold, the Beilinson--Bloch height $\hgt(Z)$ and the non-normalized N\'eron--Tate height $\hgt(\xg)$ coincide, i.e., $\hgt(Z)=\hgt(\xg)$. 
\end{theorem*}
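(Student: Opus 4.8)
The plan is to identify the cycle $Z = \Lambda_1 - \Lambda_2$ on the resolution $Y$ with (the pullback of) the difference of trigonal pencils on the sextic curve $C$, and then to match the two height pairings term by term at each place of $K$. The key geometric input is the classical description of the cubic threefold: projection from the node $\xn$ of $X_0 \subset \Pp^4$ gives a rational map to $\Pp^3$ which, after blowing up, realizes $Y$ as a conic bundle over $\Pp^3$ whose discriminant locus is exactly the sextic curve $C$ parametrizing lines on $X_0$ through $\xn$. The two rulings $\Lambda_1, \Lambda_2$ of the exceptional quadric $Q \subset Y$ correspond, under this conic bundle structure, to the two components of the preimage over $C$ of the associated étale double cover $\widetilde{C} \to C$; equivalently, the Prym variety of $\widetilde{C}/C$ is isomorphic (as a principally polarized abelian variety) to the intermediate Jacobian $J(Y)$, and under this isomorphism the Abel--Jacobi class of $Z$ in $J(Y)$ goes to the class of $\xg$ in $\Jac(C)$ sitting inside the Prym. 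This is the content one extracts from the Clemens--Griffiths / Mumford analysis of cubic threefolds and conic bundles.

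First I would set up the conic bundle $q\colon Y \to \Pp^3$ and verify that $C$ is its discriminant curve, recording the identification of $\Lambda_1 - \Lambda_2$ with the $\xg$-cycle at the level of Chow groups modulo the relations that kill homologically trivial pieces (so that the Abel--Jacobi images agree). Second, I would invoke the functoriality of the Beilinson--Bloch height pairing under correspondences: the conic bundle, together with the incidence correspondence between lines on $X_0$ and points of $C$, gives an algebraic correspondence inducing the isomorphism $J(Y) \isom \operatorname{Prym}(\widetilde C/C)$, and the height pairing on homologically trivial cycles is compatible with such correspondences (both the archimedean Arakelov contributions and the non-archimedean intersection contributions transform correctly because the correspondence is defined over $K$ and, given the regular model hypothesis, extends over $\Spec \calO_K$). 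Third, I would check that the N\'eron--Tate height of $\xg$ on $\Jac(C)$, computed via its canonical height on the abelian variety, equals the height of the corresponding point in $\operatorname{Prym}(\widetilde C/C) \subset \Jac(\widetilde C)$ with its induced (principal) polarization --- this is where one must be careful that the Prym polarization is the restriction of the theta polarization and that no factor of $2$ or correction by $\log$ of a prime creeps in; the regular model assumption on $Y$ is what guarantees the non-archimedean terms match exactly rather than only modulo $\log|\Q^\times|$.

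The main obstacle I expect is precisely this last bookkeeping: showing that the identification of heights is an \emph{equality} and not merely a congruence modulo $\Q\cdot\log|\Q^\times|$. Establishing that requires a good integral model not just for $Y$ but compatibly for $C$, the conic bundle map, and the double cover $\widetilde C \to C$, so that the local intersection multiplicities defining $\hgt(Z)$ at each finite place are literally transported to the local N\'eron pairing defining $\hgt(\xg)$. One clean way to handle this is to work instead with Hain's biextension formulation: the cycle $Z$ on $Y$ and the divisor $\xg$ on $C$ each give rise to a biextension of mixed Hodge structures (at each archimedean place) and a biextension line bundle / local height (at each finite place), and the conic bundle correspondence induces an isomorphism of these biextensions, hence an equality of their heights. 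I would likely structure the final proof around that biextension comparison, with the conic bundle geometry supplying the isomorphism and the regular model hypothesis supplying the unconditional existence and the place-by-place matching.
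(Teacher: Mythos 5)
Your geometric starting point is not correct for the nodal case, and the height-comparison step hides the real work. Projection of $X_0$ from its node is \emph{birational} onto $\Pp^3$: a general line through the node meets $X_0$ in exactly one further point, and the lines contained in $X_0$ through the node are parametrized by the genus four curve $C=V(f_2,f_3)$. So $Y$ is not a conic bundle over $\Pp^3$ (a conic bundle over $\Pp^3$ would be a fourfold); rather $Y \simeq \Bl_C \Pp^3$, and the comparison is with $\Jac(C)$ directly via $j_*\circ p^*$, not with a Prym of an \'etale double cover $\widetilde C \to C$. The Prym/conic-bundle picture you invoke belongs to the \emph{smooth} cubic threefold projected from a line (discriminant a plane quintic); in the nodal situation no double cover enters, and the rulings $\Lambda_1,\Lambda_2$ restrict to the two trigonal pencils on $C$ itself. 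The actual proof uses exactly this blow-up description: $j_*\circ p^*$ gives $A^1(C)\isom A^2(Y)$ carrying $\xg$ to $Z$ (so, crucially, $Z$ is \emph{algebraically} equivalent to zero --- this is what fails in degree $\ge 5$ by Ceresa--Collino), and a Hodge isometry $\H^1(C,\Z)\isom \H^3(Y,\Z(1))$ which reverses the sign of the intersection pairing. Your proposal never addresses this sign: the anti-isometry, the fact that $\lambda^1_C$ is \emph{minus} the canonical principal polarization, and the $(-1)^p$ in the definition of the height ($p=2$ for $Z$, $p=1$ for $\xg$) must all cancel; worrying only about factors of $2$ misses the place where an overall sign could go wrong.

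The second gap is your appeal to ``functoriality of the Beilinson--Bloch height pairing under correspondences, place by place, because the correspondence extends over $\Spec\calO_K$.'' That statement is essentially the whole difficulty, and proving it directly (or via a biextension comparison at every place, including the non-archimedean ones for codimension-two cycles) is not routine; even the existence of $\hgt(Z)$ is conditional in general. The paper avoids this by using K\"unnemann's theorem: for cycles algebraically equivalent to zero on a variety admitting a proper regular model, the Beilinson--Bloch pairing exists unconditionally and equals $\frac{1}{k_V^p}[K:\Q]$ times a N\'eron--Tate pairing on higher Picard varieties. One then only needs standard functoriality of N\'eron--Tate pairings under homomorphisms of abelian varieties, together with the identifications above and the verification that $k_Y^2=1$. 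Without importing some such result, your plan has no proof of the exact (not merely mod $\Q\cdot\log|\Q^\times|$) matching of local terms, which is precisely what the theorem asserts.
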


\subsection{Overview of this paper}

The limit mixed Hodge structures that appear in the context of this article are biextensions. We recall the relevant properties of biextensions in Section~\ref{sec:biextensions}.
The method of computing the periods of a biextension limit mixed Hodge structure is given in~Section~\ref{sec:how_to_compute_lmhs}.
We demonstrate our method of computing limit periods and our main theorem from~\cite{BdJS2022} on quartic plane curves in Section~\ref{sec:example_curves}.
In Section~\ref{sec:higher_dim_example} we demonstrate our method on a nodal cubic threefold. We apply Theorem~\ref{thm:threefold_to_curve} to determine and study the error term appearing in Conjecture~\ref{conj:bloch} in an example.
The proof of Theorem~\ref{thm:threefold_to_curve} appears in Section~\ref{sec:proof_thm_threefold_curve}. 

\subsection{Acknowledgments} 

We thank Vasily Golyshev, Matt Kerr, Greg Pearlstein, Matthias Sch\"utt, Duco van Straten, and the anonymous referees for helpful remarks.
We thank Raymond van Bommel, David Holmes and Steffen M\"uller for sharing with us their code related to \cite{vBHM2020} and for helpful discussions. 
We also acknowledge the use of Magma~\cite{Magma} and SageMath~\cite{sagemath} for facilitating experimentation.
The third author gratefully acknowledges support from MPIM Bonn. 

\section{Computing heights of biextensions} \label{sec:biextensions}

In this section we briefly recall the notion of a biextension, and explain how to compute the height of a biextension in practice. For a more detailed discussion we refer to \cite{BdJS2022} and \cite{Hain1990}.

For any integral mixed Hodge structure $V$ we will write $V_\Z \subset V_\R \subset V_\Cc$ for the underlying $\Z$-lattice and  real and complex vector spaces. The Hodge filtration on $V$ is denoted by $F^\bullet V \subset V_\Cc$ and the weight filtration by $W_\bullet V \subset V_\Z$. Note that  for biextensions we work with a stronger $\Z$-filtration instead of the usual $\Q$-filtration.

\begin{definition}\label{def:biextension} Let $H$ be a pure integral Hodge structure of weight~$-1$. A \emph{biextension} of $H$ is an integral mixed Hodge structure $B$ together with an identification of the weight graded pieces $\Gr^W B$ with $\Z(1)\oplus H \oplus \Z$.
\end{definition}
\begin{remark}
  Some authors do not consider the identification as part of the data. In that case, a choice of generators for $W_{-2} B \simeq \Z(1)$ and $W_{0}B/W_{-1}B \simeq \Z$ is called an \emph{orientation} for $B$. 
\end{remark}

Similarly, one has a notion of \emph{real} biextensions in the category of real (as opposed to integral) Hodge structures. In \cite{Hain1990} it is shown that the set of isomorphism classes of real biextensions of $H$ is canonically in bijection with the set $\R$ of real numbers. This identification allows one to attach a real number $\hgt(B)$ to any biextension $B$, by passing to the associated real biextension and taking its image in $\R$. The real number $\hgt(B)$ is called the \emph{height} of $B$ in the sense of Hain. 

We have shown in \cite{BdJS2022} that the height $\hgt(B)$ can be computed in terms of a \emph{period matrix} of $B$. We first recall how  this notion is defined. 
\begin{definition}\label{def:period_matrix}
 Let $B$ be a biextension on the  pure integral Hodge structure $H$.
Let $\gamma^0,\dots, \gamma^{2k+1}$ be a basis of $B_\Z$ that respects the weight filtration in the sense that 
\begin{itemize}
  \item $\gamma^0 = \tpi \in \Z(1)$,
  \item $\gamma^1, \, \dots, \, \gamma^{2k} \mod W_{-2}B$ is a basis for $H_\Z$, 
  \item $\gamma^{2k+1} \mod W_{-1}B = 1 \in \Z$.
\end{itemize}
Next, let $\omega_1,\dots,\omega_{k+1} \in F^0B$ be a basis that respects the weight filtration in the sense that
\begin{itemize}
  \item $\omega_1, \dots ,\omega_k \mod W_{-2}B \otimes \Cc$ is a basis for $F^0 H$,
  \item $\omega_{k+1} \mod W_{-1}B \otimes \Cc = 1 \in F^0 \Z$. 
\end{itemize}
 For each $i$, write $\omega_i = \sum_{j} a_{ij} \gamma^j$ with uniquely determined $a_{ij} \in \Cc$. The matrix $P_B = \left( a_{ij} \right)$ is called a \emph{period matrix} of the biextension $B$ (determined by the two chosen ordered bases). 
\end{definition}
Note that we can write a period matrix $P_B$ in block form as follows:
\begin{equation}\label{eq:PB}
  P_B = 
  \left( 
  \begin{array}{c|ccc|c}
       &  &     &  & \\
    b  &  & P_H &  & 0  \\
       &  &     &  & \\ \hline 
    c  &  & a   &  & 1
  \end{array}
 \right) \, , 
\end{equation}
where $P_H \in \Cc^{k \times 2k}$, $a \in \Cc^{1 \times 2k}$, $b \in \Cc^{k \times 1}$, and $c \in \Cc$. We observe that $P_H$ is a period matrix of the pure Hodge structure $H$. The vectors $a, b$ can be viewed as representing the extensions $W_0B/W_{-2}B \in \Ext^1(\Z,H)$ and $W_{-1}B \in \Ext^1(H,\Z(1))$.

\begin{definition}\label{def:hgt_of_matrix}  Denote the real and imaginary parts of the matrix $P_H$ by $\Re P_H$ and $\Im P_H$. 
  The \emph{height} of the period matrix $P_B$ is given by the expression 
  \begin{equation}\label{eq:hgt_PB}
  \hgt(P_B) = - 2 \pi \left(  \Im c - \Im a \cdot \begin{pmatrix} \Im P_H \\ \Re P_H \end{pmatrix}^{-1} \cdot 
    \begin{pmatrix} \Im b \\ \Re b \end{pmatrix}   \right).  
  \end{equation}
\end{definition}
We have the following result, see \cite[Theorem~2.9]{BdJS2022}.
\begin{theorem} \label{thm:explicit_height} Let $B$ be a biextension on the  pure integral Hodge structure $H$. Let $P_B$ be a period matrix of $B$. Then we have an equality $\hgt(B)=\hgt(P_B)$, where $\hgt(B)$ denotes the height of $B$ in the sense of Hain. 
\end{theorem}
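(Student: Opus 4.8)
The plan is to reduce Theorem~\ref{thm:explicit_height} to the known classification of real biextensions as the set $\R$, via a direct computation of the invariant attached to the real biextension $B_\R$ underlying $B$. First I would recall from \cite{Hain1990} how the real number $\hgt(B)$ is extracted: a real biextension of $H$ carries, after a choice of orientation, two splittings of the weight filtration — one compatible with the weight filtration over $\R$ (the \emph{real} splitting, using complex conjugation together with $W_\bullet$) and one compatible with the Hodge filtration (the \emph{Hodge} splitting, coming from $F^\bullet$). The discrepancy between these two splittings, measured in the one-dimensional space $\Hom(\Z, \Z(1))_\R \simeq \R$, is precisely $\hgt(B)$. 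So the task is to write both splittings explicitly in terms of the period matrix $P_B$ of \eqref{eq:PB} and read off the difference.

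The key steps, in order. (1) Fix the basis $\gamma^0,\dots,\gamma^{2k+1}$ of $B_\Z$ and the basis $\omega_1,\dots,\omega_{k+1}$ of $F^0B$ as in Definition~\ref{def:period_matrix}, so that $P_B$ has the block form \eqref{eq:PB} with $P_H$ a period matrix for $H$. (2) The Hodge splitting of $W_\bullet$ on $B_\Cc$ is spanned by $W_{-2}B_\Cc$, the $F^0$-part lifting $F^0H$, and the vector $\omega_{k+1}$ lifting $1\in\Z$; extracting the sub-biextension $W_{-1}B$ and its complement involves only the rows and columns of $P_B$ indexed by $H$, i.e.\ the data $a$, $b$, $P_H$. (3) The real splitting is obtained by intersecting with the complex-conjugate filtration: concretely one solves, over $\R$, for the unique real combination of the $\gamma^j$ ($1\le j\le 2k$) that, modulo $W_{-2}$, agrees with the $F^0H$-part of $\omega_i$ up to its conjugate — this is exactly the linear system whose coefficient matrix is $\begin{pmatrix}\Im P_H\\ \Re P_H\end{pmatrix}$, explaining the inverse appearing in \eqref{eq:hgt_PB}. (4) Comparing the two lifts of $1\in\Z=\Gr^W_0$ against the two lifts of a generator of $\Z(1)=\Gr^W_{-2}$, the mismatch in the $\Z(1)$-direction is $\Im c$ corrected by the bilinear term $\Im a\cdot\begin{pmatrix}\Im P_H\\ \Re P_H\end{pmatrix}^{-1}\cdot\begin{pmatrix}\Im b\\ \Re b\end{pmatrix}$; the overall normalization $-2\pi$ comes from the chosen generator $\tpi$ of $\Z(1)$ together with Hain's normalization of the identification with $\R$. (5) Finally, check independence of the two chosen ordered bases: a change of basis respecting the weight filtration acts on $P_B$ by block-triangular integral (resp.\ invertible complex) matrices, and one verifies directly that \eqref{eq:hgt_PB} is invariant — this also recovers that $\hgt(P_B)$ depends only on $B$, not on $P_B$.

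The main obstacle I expect is step (3)–(4): matching Hain's intrinsic definition of the height (as the position in $\R$ under his bijection, which is phrased in terms of mixed Hodge structure extension data and the archimedean height pairing) with the concrete real-linear-algebra expression \eqref{eq:hgt_PB}. This requires being careful about (i) the precise normalization conventions — factors of $2\pi$, signs, and whether one uses $\Z(1)=2\pi i\,\Z$ or $\Z(1)=\Z$ — and (ii) checking that the naive "difference of splittings" computed from the period matrix indeed lands in the correct real one-dimensional quotient and is not contaminated by the choice of complement within $W_0B/W_{-2}B$. Once the bookkeeping of these conventions is pinned down, the computation is a finite-dimensional real linear algebra identity; since this is Theorem~2.9 of \cite{BdJS2022}, I would present the argument at the level of indicating how each term of \eqref{eq:hgt_PB} arises and refer to \emph{loc.\ cit.}\ for the verification of the normalization constants.
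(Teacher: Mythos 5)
The paper does not actually prove Theorem~\ref{thm:explicit_height}: it is imported verbatim as \cite[Theorem~2.9]{BdJS2022}, so the ``proof'' here is the citation, and your sketch --- extracting Hain's invariant as the discrepancy, in the $\Z(1)$-direction, between the canonical real splittings of the two sub/quotient extensions and the $F^0$-adapted lift, all read off from the block form \eqref{eq:PB} --- is exactly the strategy of that cited proof. Since, like the paper, you defer the decisive sign and $2\pi$ normalization bookkeeping to \emph{loc.\ cit.}, your proposal is consistent with the paper and there is no substantive divergence to report.
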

Theorem~\ref{thm:explicit_height} allows to compute the height of a biextension in a straightforward manner once a period matrix is known.

\subsection{Twisted biextensions}
In geometrical contexts one often encounters Tate twists of biextensions. 

\begin{definition}\label{def:twisted_biextension}
  An integral mixed Hodge structure $V$ is called a \emph{$k$-twisted biextension} if its Tate twist $V(-k)$ is a biextension. The \emph{height} of $V$, notation $\hgt(V)$, is defined to be the height in the sense of Definition~\ref{def:hgt_of_matrix} of the biextension $V(-k)$. 
\end{definition}
\begin{remark} \label{rem:twisted}
The notion of a period matrix of a biextension naturally generalizes to the notion of a period matrix of a twisted biextension. Let $B$ be a biextension.
When $V=B(k)$ is a $k$-twisted biextension, and $P_V$ is a period matrix of $V$, then  $\tpip^{k} P_V$ is a period matrix of~$B$.
\end{remark}

\section{Computing limit mixed Hodge structures of families of hypersurfaces}\label{sec:how_to_compute_lmhs}

In~\cite{Ser2019}, the third named author describes an algorithm for translating the periods of one smooth hypersurface to another. Starting with the periods of the Fermat hypersurface, one can compute the periods of any other smooth hypersurface. Below, we describe an adaptation of this method to compute the limit periods of an odd dimensional nodal hypersurface. 

Although general limit mixed Hodge structures can be computed by the ideas presented here, the restriction to simple nodal degenerations significantly simplifies the exposition. Therefore, we will stick to this simple case, which suits our intended applications.

\subsection{The generic period matrix}

Let $n=2p-1$ with $p \in \Z_{> 0}$ an odd positive integer. We consider a family of $n$-dimensional hypersurfaces $X_t = V(f_t) \subset \Pp^{n+1}$, smooth for $t$ in a small open disk around $0$ in $\Cc$, and simply degenerating to a hypersurface $X_0 = V(f_0)$ with a single ordinary double point. Note that a simple degeneration is one where the total space is smooth, and hence, the central fiber appears with multiplicity $1$.

We will assume $f_t \in \Qbar(t)[x_0,\dots,x_{n+1}]$. In principle, the constraints on the base field can be relaxed. For example, in theory, we can work with $\Cc$ instead of $\Qbar$. However, computations require an effective subfield of $\Cc$. Also, we could assume $f_t$ depends on $t$ algebraically rather than rationally without losing computability. These relaxations do not significantly impact the discussion below.

Let $\mu_1(t),\dots,\mu_{2k+2}(t) \in \H^n(X_t,\Cc)$ be a basis, varying holomorphically in $t$, such that $\mu_1(t),\dots,\mu_{k+1}(t)$ are in  $F^{p} \H^n(X_t,\Z)$. 
We recall from~\cite{Griffiths1969vI} that we can represent each $\mu_i(t)$ as the residue of a meromorphic top form  on $\Pp^{n+1} \setminus X_t$ with polar locus $X_t$ and with the numerator depending polynomially on $t$. 

Let $\gamma_0(t),\dots,\gamma_{2k+1}(t) \in \H_n(X_t,\Z)$ be a locally constant basis near a regular value $t=t_0$ of the family $X_t$. Let $P(t) = \left( \int_{\gamma_j(t)} \mu_i(t) \right)$ denote the period matrix of $X_t$ with respect to these bases, and write $P_{ij}(t)$ for the $(i,j)$-th entry of $P(t)$. 

\subsection{The Gauss--Manin connection}
Using Griffiths--Dwork reduction applied to the meromorphic form on $\Pp^{n+1} \setminus X_t$ used to represent $\mu_i(t)$, we can compute differential operators $\cd_i \in \Qbar(t)[\frac{\partial}{\partial t}]$ for $i=1,\dots,k+1$ such that $\cd_i$ annihilates the $i$-th row of $P(t)$. 
We refer to~\cite{Dwork1962, Griffiths1969vI, Katz1968,  Lairez2015} for details; the exact procedure we use in our implementation is explained in~\cite[\S2.4-2.5]{Ser2019}. 

The next step is to compute the functions $P_{ij}(t) = \int_{\gamma_j(t)} \mu_i(t)$ from the $\cd_i$. It turns out that it already suffices  to determine the value $P(t_0)$ at a regular point $t_0 \in \Qbar$, as all the derivatives of $P_{ij}(t)$ at $t=t_0$ then easily follow by another set of Griffiths--Dwork reduction operations~\cite[p.~3003]{Ser2019}. Now the main result of~\cite{Ser2019} and the accompanying Magma~\cite{Magma} package \texttt{PeriodSuite}
allow us to compute such an initial value $P(t_0)$, which means approximating it to arbitrary precision with rigorous error bounds.

Each $\cd_i$ is a Picard--Fuchs differential equation~\cite{Deligne1970}. It follows that one can find a basis of solutions to each $\cd_i$ at $t=0$ where each basis element is a polynomial in $\log(t)$ with $\Qbar[[t]]$ coefficients. The SageMath~\cite{sagemath} package \texttt{ore\_algebra}~\cite{Kauers2013, Mezzarobba2016} is able to perform numerical holomorphic continuation by carrying a basis of solutions at $t=t_0$ to $t=0$ along a path on the complex plane whose interior avoids the singularities of $\cd_i$. 

This gives us the power series expansion of the period matrix,
\begin{equation}
  P(t) \in (\Cc[[t]][\log(t)])^{(k+1) \times 2(k+1)}.
\end{equation}
Our computations yield an approximation of the complex power series coefficients (with rigorous error bounds), and a truncation of the power series to some finite power. 

\subsection{Computing the limit mixed Hodge structure}

Recall that we are assuming a simple degeneration at $t=0$ to a hypersurface with a single ordinary double point. Picard--Lefschetz theory tells us that in fact $P(t) = P_0(t) + P_1(t)\log(t)$ with $P_0, P_1 \in  \Cc[[t]]$. The matrix $P_1(t)$ determines  the monodromy around $t=0$. 
 Although $P_1(0)$ is only known approximately, it is necessarily approximating a matrix with \emph{integer} coefficients. Therefore rounding its entries to the nearest integer gives us the matrix representation of the monodromy operator $T$ in terms of the homology basis $\gamma_i$. This computation is rigorous when the error of the entries is less than $0.5$.

From $P_1(0)$ we immediately obtain the weight filtration of the limit mixed Hodge structure on $\H^n(X_t,\Z)$ by computing $W_{n-1} = \Im (T-1)$, $W_n = \Ker (T-1)$, $W_{n+1} = \H^n(X_t,\Z)$.   From this point onward, possibly upon changing the homology basis appropriately, we assume that our $\gamma_i(t)$s actually respect the weight filtration. In our situation this means that $P_1(0)$ will have only a single corner entry equal to $\pm 1$ and $0$s everywhere else. 
We choose our basis so that the corner entry of $P_1(0)$ is $-1$. This choice of sign is a consequence of the Picard--Lefschetz formula if one insists that $\gamma^0\cdot \gamma^{2k+1} = 1$. 

The matrix $P_0(0)$ will be the period matrix of the limit mixed Hodge structure at $t=0$, with columns respecting the weight filtration. Our limit mixed Hodge structure will be a $(-p)$-twisted biextension in the sense of Definition~\ref{def:twisted_biextension}. The orientation of the underlying biextension is not intrinsic, but we choose it to be compatible with $P_1(0)$.  

 It follows  from the formula in Definition~\ref{def:hgt_of_matrix} that the height of a biextension is independent of a simultaneous change of signs in the generators of a biextension. As the matrix $P_1(0)$  fixes generators for $\Gr^W_{n-1} $ and $ \Gr^W_{n+1}$ up to a \emph{common} sign change, we see that the height of our limit mixed Hodge structure is well defined. 
 
By Theorem~\ref{thm:explicit_height} and Remark~\ref{rem:twisted}, computing the height of the period matrix $\tpip^{-p} P_0(0)$ gives us the height of the limit mixed Hodge structure associated to the family $X_t$ at $t=0$.

\begin{remark}  
  A detailed account of the limit mixed Hodge structure in the case where $X_t$ is a family of \emph{curves} is given in~\cite[Chapter~1]{Carlson2017}.
\end{remark}

\begin{remark}\label{rem:half_matrix} 
  Note that we are only using one half of a basis for cohomology for our computations, namely $\mu_1(t),\dots,\mu_{k+1}(t)$. The complex conjugate of this basis completes the basis. It is therefore no mystery that we can recover the $2(k+1) \times 2(k+1)$ integral monodromy operator $T$ from the $(k+1)\times 2(k+1)$ integral matrix $P_1(0)$. In practice, it is often  easier to observe that the row space of $P_1(0)$ is identified with $W_{n-1}$, and $W_n$ is the orthogonal space to $W_{n-1}$ with respect to the intersection product on homology.
\end{remark}

\section{Computing the height on nodal curves} \label{sec:example_curves}

Our main result in~\cite{BdJS2022} is a formula expressing the N\'eron--Tate height of a divisor $p-q$ on a curve $C$ in terms of a limiting period associated to a smoothing of the nodal curve $C/(p \sim q)$. In this section, we recall the precise statement and then apply the method in Section~\ref{sec:how_to_compute_lmhs} to compute heights on curves.

This exercise serves two purposes. First, we illustrate the general principle of how a refinement of Conjecture~\ref{conj:bloch} can be used to compute Beilinson--Bloch heights using limiting periods. Second, we do this in a setting where our calculation, and hence our implementation of the limit period computation, can be verified by the methods of~\cite{vBHM2020, Holmes2012, Muller2014}.

\subsection{Limit periods and heights for curves}

In this subsection, we recall the set-up and the main result of~\cite{BdJS2022}. Let $K$ be a number field. Let $X_0$ be a geometrically integral projective curve over $K$ containing a single node. 
Let $C$ be the normalization of $X_0$ and let $p, q$ be the two points in the preimage of the node of $X_0$ in $C$. 
After a quadratic base change, we may assume $p,q$ are defined over $K$. 
We are interested in the height $\hgt(Z)$ of the degree-zero divisor $Z=p-q$ on the curve $C$. We recall that  $\hgt(Z)$ equals the non-normalized N\'eron--Tate height of the class determined by the divisor $Z$ in the Jacobian of $C$. 

Let $\co_K$ be the ring of integers of $K$ and let $\cc$ be a proper regular model of $C$ over $\co_K$. Let $\overline{p}, \overline{q}$ denote the Zariski closures on $\cc$ of $p,q \in C(K)$. Let $S$ be a quasi-projective smooth curve over~$K$ with a base point $0 \in S(K)$ and let $X \to S$ be a smoothing deformation of $X_0$ over $S$. 

The Kodaira--Spencer map yields an identification $\Omega_{S,0} \isoto \Omega_{C,p} \otimes \Omega_{C,q}$ of $K$-vector spaces. The latter space carries an integral structure coming from the regular model $\cc$, namely the $\co_K$-lattice 
$\Omega_{\cc/\co_K,\overline{p}} \otimes_{\co_K} \Omega_{\cc/\co_K, \overline{q}} \subset \Omega_{C,p} \otimes_K \Omega_{C,q}$. By transport of structure we obtain an $\co_K$-lattice in $\Omega_{S,0}$ determined by $\cc$. 
This integral structure allows us to define a norm $\norm{\chi} \in \R_{\ge 0}$ for any $\chi \in \Omega_{S,0}$. 

Let $\Phi$ be a $\Q$-divisor on $\cc$ supported only on the closed fibers of $\cc$ over $\co_K$ such that $\overline{p}-\overline{q}+\Phi$ is of degree zero on every component of every fiber of $\cc$ over $\co_K$. It is not hard to see using elementary intersection theory on the regular arithmetic surface $\cc$ that such a $\Phi$ always exists.

For any two $\Q$-divisors $\cd,\ce$ on $\cc$ whose supports are disjoint over $C$ 
we define
\begin{equation}\label{eq:finite_intersection}
(\cd, \ce)_{\mathrm{fin}} \coleq \sum_\xp \iota_\xp(\cd,\ce) \log \Nm(\xp),
\end{equation}
where the sum runs over all maximal ideals of $\co_K$, where $\iota_\xp$ refers to the intersection multiplicity of the two divisors on $\cc$ over $\xp$, and where $\Nm(\xp)$ denotes the norm of $\xp$. The sum is indeed finite, and can be understood as the finite part of the Arakelov intersection of $\cd$ and $\ce$.
 
\begin{theorem}[Main Theorem of \cite{BdJS2022}] \label{thm:conj_curves}  Let $\chi \in \Omega_{S,0}$ be a non-zero cotangent vector. Let $\hgt(\LMHS_\chi)$ be the biextension height of the limit mixed Hodge structure $\LMHS_\chi$ as defined in Section~\ref{subsec:conj}. There is an equality of real numbers
  \begin{equation} \label{eq:conj_curves}
 \hgt(Z)  = \hgt(\LMHS_\chi) +
\log \norm{\chi} + 2 \,(\overline{p} \cdot \overline{q})_{\mathrm{fin}} - \left( (\overline{p}-\overline{q})\cdot \Phi \right)_{\mathrm{fin}}, 
  \end{equation}
with $\hgt(Z)$ the non-normalized N\'eron--Tate height of the divisor $Z=p-q$ on the curve $C$. 
\end{theorem}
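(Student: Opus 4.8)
The strategy is to compute both sides of~\eqref{eq:conj_curves} as sums of local contributions over the places of $K$ and to match them place by place. For the left-hand side I would use the Faltings--Hriljac description of the N\'eron--Tate height: $\hgt(Z)$ equals minus the Arakelov self-intersection on the arithmetic surface $\cc$ of an \emph{admissible} extension $\widehat{Z}$ of the divisor $p-q$, whose vertical part is a $\Q$-divisor $\Phi$ exactly of the type in the statement (the orthogonality-to-fibres condition is precisely what makes $\widehat{Z}$ admissible at the finite places) and whose archimedean part is built from the Arakelov Green's functions $g_\sigma$ on the Riemann surfaces $C_\sigma$. Expanding $-(\widehat{Z}\cdot\widehat{Z})$ separates $\hgt(Z)$ into a finite part, carried by intersection multiplicities on $\cc$ together with $\overline{p}^2$, $\overline{q}^2$, $(\overline{p}\cdot\overline{q})_{\mathrm{fin}}$ and $((\overline{p}-\overline{q})\cdot\Phi)_{\mathrm{fin}}$, and an archimedean part, carried by the values of $g_\sigma$ at $p,q$ and by the Arakelov metrics on the tangent lines $T_pC_\sigma$ and $T_qC_\sigma$.

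The finite contributions I would dispatch by elementary intersection theory on $\cc$: applying the adjunction formula on the regular model to rewrite $\overline{p}^2$ and $\overline{q}^2$ in terms of $\omega_{\cc/\co_K}$ and archimedean metric data, the genuinely finite leftover is precisely $2(\overline{p}\cdot\overline{q})_{\mathrm{fin}}-((\overline{p}-\overline{q})\cdot\Phi)_{\mathrm{fin}}$. No Hodge theory enters here; this is bookkeeping once the Arakelov decomposition is in place.

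The archimedean comparison is the heart. Fix a complex embedding $\sigma\colon K\hookrightarrow\Cc$. One writes a period matrix of the twisted biextension $\LMHS_{\chi,\sigma}$ in the block form~\eqref{eq:PB}: the pure block $P_H$ is a Riemann period matrix of $C_\sigma$; the vectors $a$ and $b$ are the Abel--Jacobi integrals $\int_q^p$ of a basis of holomorphic differentials, respectively of the dual basis, so that autoduality of $\Jac(C_\sigma)$ identifies \emph{both} extension classes $W_0B/W_{-2}B$ and $W_{-1}B$ with the class of $p-q$; and the corner entry $c$ is governed by the asymptotics of the period map of the smoothing $X_t/S$ as $t\to 0$, through the Picard--Lefschetz/Clemens--Schmid picture recalled in Section~\ref{sec:how_to_compute_lmhs} together with Schmid-type norm estimates. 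Tracking the integral lattices through this limit, and using the Kodaira--Spencer isomorphism $\Omega_{S,0}\isoto\Omega_{C,p}\otimes\Omega_{C,q}$ to normalise the smoothing parameter by $\chi$, one shows that $\LMHS_{\chi,\sigma}$ is, up to Tate twist, the pullback along $[p-q]$ of the Poincar\'e biextension of $\Jac(C_\sigma)$, translated by the element of $\Ext^1_{\mathrm{MHS}}(\Z,\Z(1))\cong\Cc^\times$ determined by $\chi$; that translation shifts the Hain height by $\log\abs{\chi}_\sigma$. Since the Hain height of the Poincar\'e pullback recovers the archimedean part of the N\'eron--Tate height at $\sigma$ --- the archimedean case of the compatibility between biextension heights and N\'eron's local symbols --- feeding the period matrix into Definition~\ref{def:hgt_of_matrix} and applying Theorem~\ref{thm:explicit_height} together with Remark~\ref{rem:twisted} yields a per-embedding identity expressing the archimedean part of $\hgt(Z)$ at $\sigma$ as $\hgt(\LMHS_{\chi,\sigma})+\log\abs{\chi}_\sigma$ plus archimedean adjunction terms. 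Summing over all $\sigma$, these adjunction terms cancel the ones isolated in the previous step, $\sum_\sigma\log\abs{\chi}_\sigma=\log\norm{\chi}$, and~\eqref{eq:conj_curves} results.

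The main obstacle is this archimedean identification: pinning down the corner period $c$ of the limit mixed Hodge structure and proving that the ``$\log t$'' asymptotics of the degenerating periods reproduce the Arakelov Green's function data with the correct $\chi$-normalisation, i.e.\ that $\LMHS_{\chi,\sigma}$ is canonically the $\chi$-twist of the non-degenerate Poincar\'e pullback. Making this rigorous forces a careful passage through the Clemens--Schmid exact sequence and Picard--Lefschetz theory in order to control every integral structure involved; that is exactly where the geometric hypotheses (the regular model $\cc$, the sections $\overline{p},\overline{q}$, and the Kodaira--Spencer identification) enter, and where the correction terms $\log\norm{\chi}$, $2(\overline{p}\cdot\overline{q})_{\mathrm{fin}}$ and $((\overline{p}-\overline{q})\cdot\Phi)_{\mathrm{fin}}$ are produced.
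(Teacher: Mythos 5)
This paper does not actually prove Theorem~\ref{thm:conj_curves}: it is quoted as the Main Theorem of \cite{BdJS2022}, so the only yardstick is the strategy of that reference. Your outline reproduces the expected architecture: decompose the N\'eron--Tate height of $Z=p-q$ into non-Archimedean contributions computed by intersection theory on the regular model $\cc$ and Archimedean contributions, and then, embedding by embedding, identify $\LMHS_{\chi,\sigma}$ with a $\chi$-normalised pullback of the Poincar\'e biextension of $\Jac(C_\sigma)$ along the Abel--Jacobi class of $p-q$, so that Theorem~\ref{thm:explicit_height} turns its Hain height into the Archimedean N\'eron symbol plus a logarithmic term in $\chi$. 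Your finite-place expansion is also consistent with the shape of \eqref{eq:conj_curves}: with $\Phi$ orthogonal to the vertical divisors one has $-(\overline{p}-\overline{q}+\Phi)^2=-\overline{p}^2-\overline{q}^2+2\,\overline{p}\cdot\overline{q}-(\overline{p}-\overline{q})\cdot\Phi$, which is exactly where the last two terms of the formula originate.

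As a proof, however, the attempt has a genuine gap, and you name it yourself as ``the main obstacle'' without closing it. Concretely: (i) the corner entry $c$ of the limit period matrix is never computed, i.e.\ you do not establish that $\LMHS_{\chi,\sigma}$ is the Poincar\'e biextension pullback translated by the class of $\chi$ in $\Ext^1(\Z,\Z(1))\cong\Cc^\times$; this is the Picard--Lefschetz/nilpotent-orbit computation with the Kodaira--Spencer normalisation that actually produces the Green's-function term and the $\log\abs{\chi}_\sigma$ shift, and without it the Archimedean half of \eqref{eq:conj_curves} is unverified. (ii) The assertion that the Hain height of the Poincar\'e pullback recovers the Archimedean N\'eron pairing is itself a theorem that must be proved or precisely cited. (iii) The self-intersection terms $-\overline{p}^2-\overline{q}^2$ are not local N\'eron pairings (the supports are not disjoint); your appeal to adjunction leaves unverified the decisive cancellation by which their finite part becomes the lattice norm $\log\norm{\chi}$, defined via the $\co_K$-structure on $\Omega_{\cc/\co_K,\overline{p}}\otimes\Omega_{\cc/\co_K,\overline{q}}$, while their Archimedean part absorbs the $\chi$-dependence of $\sum_\sigma\hgt(\LMHS_{\chi,\sigma})$. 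In particular your identity $\sum_\sigma\log\abs{\chi}_\sigma=\log\norm{\chi}$ presupposes that the Archimedean shift is measured against a lattice generator, whereas in the Hodge-theoretic computation it is measured against the deformation parameter $t$; reconciling the two normalisations is precisely the content of steps (i) and (iii). Until these are carried out, the proposal is a correct plan rather than a proof.
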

We observe from~\eqref{eq:finite_intersection} that indeed the error term $\hgt(Z)-\hgt(\LMHS_\chi)$ lies in $\Q \cdot \log |\Q^\times|$. 
We also observe that Theorem~\ref{thm:conj_curves} gives a new method to compute N\'eron--Tate heights on curves. 
The aim of the remainder of this section is to present and discuss an explicit example of such a computation. 

\subsection{Nodal plane quartics}

For our example, we will consider a nodal plane quartic, that is, a nodal hypersurface $X_0 \subset \pp$ of degree $4$. 
The method in Section~\ref{sec:how_to_compute_lmhs} is applicable in this setting as well as Theorem~\ref{thm:conj_curves}. First, we set-up a deformation of $X_0$ and then we briefly describe how to compute the non-Archimedean terms in Equation~\ref{eq:conj_curves}. 

\subsubsection{Set-up and notation}

Let $f \in \Z[x,y,z]$ be a primitive, homogeneous polynomial of degree four. Suppose that $X_0=V(f) \subset \pp_\Q$ is geometrically irreducible with a single node at the origin $[0:0:1]$ with the two tangents at the origin defined over $\Q$. 

Take $S=\A^1$ with parameter $t$ and consider the family of quartics $X/S$ cut out by $f + t \cdot z^4$ for $t \in \A^1$. We note that the fibers $X_t$ are generically smooth and degenerate simply to the nodal curve $X_0$ at the origin. 

We write $\chi$ for the element $\mathrm{d} t|_0 \in \Omega_{S,0}$. We first calculate the height of the limit mixed Hodge structure $\LMHS_\chi$ determined by the family $X_t$ and our choice for $\chi $ following the method described in Section~\ref{sec:how_to_compute_lmhs}. 

\subsubsection{The non-Archimedean terms}

Besides the height of the limit mixed Hodge structure, there are three terms in Equation~\ref{eq:conj_curves} which are of non-Archimedean origin. We will describe here how to compute these terms.

Let $C$ denote the normalization of $X_0$ with two designated points $p, q$ over the node. As before, we write $Z=p-q$. The curve $C$ is a smooth genus two curve over $\Q$. 

With $f$ defined over $\Z$ and primitive, we can consider the naive plane model $\cx_0 \subset \pp_\Z$ of $X_0$ defined by $f$.
Blow-up the Zariski closure in $\cx_0$ of the node  of $X_0$ to get an arithmetic surface $\cc'$. The generic fiber of $\cc'$ is $C$. Let $\cc$ be any proper regular model resolving the singularities of $\cc'$. 

Computing the term $\Phi$ on $\cc$ is straightforward linear algebra once we have the intersection matrix of the components of all singular fibers of $\cc$.  The Magma package \emph{Regular models of arithmetic surfaces} by Donnelly computes regular models of smooth (weighted) plane curves over number fields; this includes the aforementioned intersection matrices as well as a method to determine the components to which a given rational point of $C$ specializes. The value of $\left( (\overline{p}-\overline{q})\cdot \Phi \right)_{\mathrm{fin}}$ can thus be computed.

For the term $(\overline{p},\overline{q})_{\mathrm{fin}}$, one could also use the general purpose code of~\cite[Section~2]{vBHM2020}. The integral structure on $\Omega_{C,p}\otimes \Omega_{C,q}$ can be computed using the definition in a straightforward fashion, although it is cumbersome. Alternatively, both $\norm{\chi}$ and $(\overline{p},\overline{q})_{\mathrm{fin}}$ can be worked out in our setting by computing formally locally around the node to simplify calculations.

\subsubsection{An explicit quartic} 

We demonstrate the procedure above with a specific quartic $X_0$ and the deforming family $X_t$:
\begin{equation}
(198x^2 + 325xy + 75y^2)z^2 + (x^2y + y^3)z + x^4 + {\color{red} t} z^4.
\end{equation}
Running the method of Section~\ref{sec:how_to_compute_lmhs} on $X_t$, we found that the height $\hgt(\LMHS_\chi)$ of the associated limit mixed Hodge structure $\LMHS_\chi$ equals
% 233 digits
% 1.47000896761610880819515453579054355236003897343727804131449975310747755971589937659526601593765128069675492205055478992312101912036632992687123226212769674627847198284090016033118385176390465902592050148882830866959385119024716910513698742556220953683247613584627807190140744539134341894394385609679762707375417251553616464700724846682340
\begin{equation}
-12.220335776489136661293609468838149530311651887679\dots
\end{equation}
We note that the quadratic part of the equation for $X_0$ and its discriminant factor as follows:
\begin{equation}
198x^2 + 325xy + 75y^2 = (11x + 15y)(18x + 5y), \quad \Delta = (5\cdot 43)^2.
\end{equation}
By a local calculation on our regular model $\cc$ we find that
\begin{equation}
\log \norm{\chi} =  4 \log(5\cdot 43), \quad
2 \,(\overline{p} \cdot \overline{q})_{\mathrm{fin}} = 2\log(5\cdot 43) \, . 
\end{equation}
Finally the closed fibers of $\cc$ are all irreducible so that we may take $\Phi=0$ (start with the naive model in $\Pp(1,3,1)$ attained by blow-up and use Donnelly's work). Theorem~\ref{thm:conj_curves} implies that
\begin{equation} \label{eqn:final_ht} \begin{split}
  \hgt(Z) &  = \hgt(\LMHS_\chi) + 6 \log(5 \cdot 43) \\
   & = 20.003492392276840127148001610594058520196773347220\dots 
   \end{split}
\end{equation}

Our calculations were performed using a combination of Magma~\cite{Magma} and SageMath~\cite{sagemath}. 
The period calculations took a few minutes with over 200 digits of precision. 

\subsubsection{Verifying the height computation}

The normalization $C$ of $X_0$ can be given in $\Pp(1,3,1)$ by the equation
\begin{equation} \label{eqn:curve}
% 2*X^5*Z - 4*X^4*Z^2 + X^3*Y + 7*X^3*Z^3 + X^2*Y*Z + 68*X^2*Z^4 + 179*X*Z^5 + Y^2 - 53*Z^6
2X^5Z - 4X^4Z^2 + X^3Y + 7X^3Z^3 + X^2YZ + 68X^2Z^4 + 179XZ^5 + Y^2 - 53Z^6.
\end{equation}
The two points $p,q \in C \subset \Pp(1,3,1)$ that lie over the node of $X_0$ are
\begin{equation} \label{eqn:points}
   (4 : -2055 : 15) \, , \quad (13 : 2465 : -5).
\end{equation}
The N\'eron--Tate height $\hgt(Z)$ of the divisor $Z=p-q$ on $C$ can alternatively be calculated based on \eqref{eqn:curve} and \eqref{eqn:points} by the built-in Magma package \emph{Computation of canonical heights using arithmetic intersection theory} due to M\"uller with contributions by Holmes and Stoll. We checked the outcome in \eqref{eqn:final_ht} against the output of this built-in package. The results agree to numerical precision. The computations here took a few seconds.

\subsubsection{Remarks about performance}

It is clear that for the calculation of N\'eron--Tate heights on genus two curves, the built-in Magma package is to be preferred for the economy of time. Indeed, experimental genus 2 height computation within the framework of the Birch--Swinnerton-Dyer conjecture dates back over two decades~\cite{Flynn01} and had time to mature. 

We view our calculations with genus 2 curves merely as a proof of concept for computing heights via limit mixed Hodge structures. The asymptotic computational complexity of period computation by deformation has not been fully analyzed, although comparative performance for very high precision appears to be excellent~\cite[\S 5]{Ser2019}. The method presented here---and, in particular, its implementation---is only a few years old. We expect significant improvements in performance over the years. 

% We will now proceed to consider a three dimensional example in the next section, along similar lines. 

\section{Computing the height on nodal threefolds} \label{sec:higher_dim_example}

In this section we will consider Conjecture~\ref{conj:bloch} in the setting of a nodal cubic threefold $X_0 \subset \Pp^4$. Using the notation there, we would like to study the ``error term'' $\hgt(Z) - \hgt(\LMHS_{\chi})$ for a smoothing deformation of $X_0$. In particular, we would like to show with an example that the primes supporting this error term have a simple geometric explanation, analogous to Theorem~\ref{thm:conj_curves}. 

The difficult part is to compute the Beilinson--Bloch height $\hgt(Z)$ on the proper transform $Y$ of $X_0$. However, we show that $\hgt(Z)$ is, in this case, equal to the N\'eron--Tate height of a natural point in the Jacobian of a curve $C$ canonically associated to $X_0$.

\subsection{Dimensional reduction in the height computation} \label{sec:dim_reduction}

Working over a characteristic $0$ field, let $X_0 \subset \Pp^4$ be a cubic threefold with a single ordinary double point $x \in X_0$. Blow-up $x $ in $\Pp^4$ and let $Y \subset \Bl_x \Pp^4$ be the proper transform of $X_0$. Let $Q \subset Y$ be the exceptional quadric of the blow-up and let $\Lambda_1, \Lambda_2 \subset Q$ be two lines from distinct rulings of $Q$. We will consider $Z = \Lambda_1 - \Lambda_2 \in \CH_1(Y)$.

Let $C \subset \ppp \simeq \Pp(T_x \Pp^4)$ be the space of lines passing through $x$ and contained in $X_0$. Then $C$ is a smooth complete intersection curve of degree $(2,3)$, hence a canonically embedded genus four curve. It is classical that $Y \simeq \Bl_C \ppp$ and $Q \subset Y$ is the proper transform of the unique quadric containing $C$. We refer to Section~\ref{sec:proof_thm_threefold_curve} for more details. Let $\xg_i$ be the intersection of $\Lambda_i$ with $C$ and consider $\xg=\xg_1 - \xg_2$. Note that $\xg_1$ and $\xg_2$ define the two trigonal pencils on $C$.   

Suppose now that $X_0$ and the two rulings of $Q$ are defined over a number field $K$. Assume that the smooth variety $Y$ has a proper regular model over the ring of integers of $K$. We will denote by $\hgt(\xg)$ the non-normalized N\'eron--Tate height of the distinguished point $\xg$ in the Jacobian of~$C$.

\begin{theorem}\label{thm:threefold_to_curve} 
 The Beilinson--Bloch height $\hgt(Z)$ exists and equals $\hgt(\xg)$. 
\end{theorem}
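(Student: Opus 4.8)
We need to show that the Beilinson--Bloch height $\hgt(Z)$, where $Z = \Lambda_1 - \Lambda_2$ is the difference of two rulings of the exceptional quadric $Q$ inside the resolution $Y = \Bl_C \ppp$, coincides with the Néron--Tate height $\hgt(\xg)$ of the difference $\xg = \xg_1 - \xg_2$ of the two trigonal pencils on the genus-four curve $C$. The strategy is to exploit the two descriptions of $Y$ — as the proper transform of the nodal cubic $X_0 \subset \Pp^4$, and as the blow-up of $\ppp$ along $C$ — together with the compatibility of the Beilinson--Bloch height pairing with proper pushforward and pullback along the blow-down maps. Let me write me think about the key steps.
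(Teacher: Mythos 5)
Your proposal is a statement of intent rather than a proof: it ends before any of the steps are carried out, so essentially the entire argument is missing. The general idea you name --- comparing the two descriptions of $Y$ and using functoriality of the height pairing under the blow-down maps --- is indeed the right starting point, but none of the substance is there, and the substance is where the difficulties lie. First, ``compatibility with pushforward and pullback'' is not something one can invoke off the shelf for the Beilinson--Bloch pairing; the paper makes it precise via K\"unnemann's machinery of higher Picard varieties: one shows that $j_* \circ p^*$ (for $p \colon Y \to \Pp^3$ the projection from the node and $j \colon E \to Y$ the exceptional divisor) sends the class of $\xg$ to the class of $Z$, so that $Z$ is \emph{algebraically} equivalent to zero --- a stronger statement than homological triviality, and exactly what is needed to apply K\"unnemann's Theorem~8.2 expressing the Beilinson--Bloch pairing as a N\'eron--Tate pairing $\langle \theta^p(Z_1), \lambda^{d+1-p}\circ\theta^{d+1-p}(Z_2)\rangle$ on $\Pic^p$ (this also uses the regular-model hypothesis to guarantee existence of $\hgt(Z)$). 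Your plan never addresses why $Z$ is algebraically trivial, nor why the pairing exists.

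Second, there is a sign issue that any correct argument must confront and that your plan does not mention: the Hodge isometry $\H^1(C,\Z) \simeq \H^3(Y,\Z(1))$ induced by $j_*\circ p^*$ carries the intersection pairing on $C$ to \emph{minus} the one on $Y$, so under the induced identification $\Pic^1(C) \simeq \Pic^2(Y)$ the polarization $\lambda_Y^2$ corresponds to $-\lambda_C^1$. This produces a minus sign in the comparison of self-pairings, which only cancels because the Beilinson--Bloch height is defined as $(-1)^p\langle Z,Z\rangle$, with $p=2$ on the threefold side and $p=1$ on the curve side. One also needs the normalization constants of K\"unnemann's theory ($k_Y^2=1$, $k_C^1=1$) to get an exact equality rather than an equality up to a rational factor. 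Without these ingredients --- algebraic triviality of $Z$, the reduction to a N\'eron--Tate pairing on higher Picard varieties, the identification of polarizations up to sign, and the bookkeeping of the signs and constants --- the proposed plan does not constitute a proof.
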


The proof of this theorem is rather lengthy and we will postpone it to Section~\ref{sec:proof_thm_threefold_curve}. 
Note that a height computation for a codimension two cycle on a threefold is in general inaccessible whereas a N\'eron--Tate height computation on a curve is comparatively accessible (but, in practice, not without limitations). 

\subsection{Setting up the deformation} \label{details_calc}

We work with a nodal cubic threefold $X_0 \subset \Pp^{4}$ defined over $\Q$ having a simple node at the origin $[0:0:0:0:1]$ and given as the zero locus of a primitive polynomial $F \in \Z[x,y,z,w,u]$. 

Let $U \subset \Pp^4$ be the affine open chart where $u$ is set to $1$ and write
\begin{equation}
\label{eq:local_eqns_of_cubic}
  F|_U = f_2 + f_3
\end{equation}
with $f_d \in \Z[x,y,z,w]$  homogeneous of degree $d$. The associated genus four curve $C$ is the complete intersection $C = V(f_2,f_3) \subset \ppp$. As a deformation of $X_0$ we pick the family of hypersurfaces $X_t=V(F_t)$ with \begin{equation}
 F_t \colonequals F + t \cdot u^3,\quad t \in \Cc.
\end{equation}
We note that the generic member of this family is smooth, and that the $X_t$ degenerate simply into the threefold $X_0$.
Let $\chi = \mathrm{d}t|_0$ as usual.

\begin{example}\label{ex:explicit}
  For an explicit example, we choose
% R<x,y,z,w,t>:=PolynomialRing(Integers(),5);
% f2:=x*y-z*w;
% f3:=(x^2*w + y^2*w - w^3) + ( 5*z^3 )+ 2*x*y^2;
% f:=Homogenization(f2+f3,t);
% X:=Scheme(Proj(R),f);
  \begin{align*}
    f_2 &=xy-zw, \\
    f_3 &= x^2w + y^2w - w^3 +  5z^3 + 2xy^2.
  \end{align*}
  Applying the method in Section~\ref{sec:how_to_compute_lmhs} we find that the limit mixed Hodge structure $\LMHS_\chi$ associated to the family $X_t$ at $t=0$ has height:
  \begin{equation} \label{eqn:ht_cubic_hs}
    \hgt(\LMHS_\chi) = 1.5338985286038602474748214314768611462429296785346\dots
% 1.53389852860386024747482143147686114624292967853468298460007616624544356354364519475972622474472370340980560751703546385398782410961009605547526544150112154988981038959145077811412151778755521288
% correct to at least 195 digits
  \end{equation}
  It took about 2 hours to compute this number to 195 digits of rigorous precision on a CPU with 2,3 GHz Quad-Core Intel Core i7.
\end{example}

\subsection{Computing the N\'eron--Tate height on the associated curve}

We next address the N\'eron--Tate height on the genus four curve $C$. Let $\xg_1, \xg_2$ denote the two trigonal pencils on $C$ and write $\xg=\xg_1-\xg_2$. 
We choose four degree three divisors $D_i,E_i \in |\xg_i|$, $i=1, 2$, so that the supports of the divisors $D_1,D_2,E_1,E_2$ are all disjoint. 

Following \cite{Gross1986} we have
\begin{equation} \label{eqn:ht_divisors}
\hgt(\xg) =  \langle  D_1-D_2, E_1-E_2 \rangle_{\mathrm{NT}}, 
\end{equation}
where $\langle \cdot, \cdot \rangle_{\mathrm{NT}}$ denotes the classical N\'eron height pairing, and a canonical decomposition
\begin{equation} \label{eqn:decomp}
  \langle D_1-D_2, E_1-E_2 \rangle_{\mathrm{NT}}  =  \langle D_1-D_2, E_1-E_2 \rangle _\infty + \sum_\xp \langle D_1-D_2, E_1-E_2 \rangle_\xp 
\end{equation}
of the N\'eron pairing
as a sum of an Archimedean contribution $\langle D_1-D_2, E_1-E_2 \rangle _\infty$ and non-Archimedean contributions $\langle D_1-D_2, E_1-E_2 \rangle_\xp$ indexed by the prime numbers. The sum over the prime numbers is finite, and evaluates as an element of $\Q \cdot \log |\Q^\times|$. We do not define the non-Archimedean contributions here; they are defined in a way comparable to~\eqref{eq:finite_intersection}.

\begin{example}\label{ex:explicit_2}
  We continue with our Example~\ref{ex:explicit}. Note that each of the degree three divisors $D_i,E_j$ on our genus four curve $C$ is given by a line on the quadric $V(xy-zw) \subset \ppp$. For the linear system $\xg_1$, we will use lines of the form
  \begin{equation}
    A_n \colonequals V(n x - z, nw-y), \quad n \in \Z.
  \end{equation}
  For the linear system $\xg_2$, we will use lines of the form
  \begin{equation}
    B_m \colonequals V(mx - w, mz-y), \quad m \in \Z.
  \end{equation}
  We set $D_1 \colonequals C \cdot A_{-2}$, $D_2 \colonequals C \cdot B_{-1}$, $E_1\colonequals C \cdot A_{2}$, and $E_2 \colonequals C \cdot B_{0}$.  
\end{example}

\subsection{Prime support of the non-Archimedean terms}

In principle, the non-Archimedean terms $\langle D_1-D_2, E_1-E_2 \rangle_\xp$ can be computed in a straightforward manner using a local regular model of $C$ over $\Z_\xp$. However, available computer implementations for the determination of a regular model require a smooth (weighted) \emph{plane} curve over a global field as a starting point. Nevertheless, it is easy to determine the primes $\xp$ for which $\langle D_1-D_2, E_1-E_2 \rangle_\xp$ are possibly non-zero.

Let $\cc \subset \ppp_\Z$ be the naive model of $C$ obtained by taking the zero locus of $f_2,f_3$ after clearing denominators and common factors over the integers.  Then $\langle D_1-D_2, E_1-E_2 \rangle_\xp$ can only be non-zero either if a $D_i$ intersects an $E_j$ modulo $\xp$, or if the naive integral model $\cc$ of $C$ is singular at $\xp$. The primes satisfying either of these conditions are easy to compute.

\begin{example}\label{ex:explicit_3}
  Continuing with the set-up in Examples~\ref{ex:explicit} and~\ref{ex:explicit_2} we find that the divisors may only intersect over the primes $2,5,11$ whereas the naive model $\cc$ is singular only over $2,5,31949591$.
\end{example}

\subsection{Computing the Archimedean term}

For the moment let $C$ be any smooth projective connected complex curve of positive genus~$g$. In \cite[Proposition 3.1]{vBHM2020} one finds an expression using theta functions on the Jacobian of $C$ to evaluate Archimedean N\'eron pairings of the form $\langle P_1-P_2, E_1-E_2 \rangle_\infty$, where $P_1, P_2$ are points on $C$, and where $E_1, E_2$ are divisors of degree~$g$, so that the supports of the divisors $P_1, P_2$ and $E_1, E_2$ are all disjoint. 

The formula in loc.\ cit.\ works as soon as both divisors $E_1, E_2$ are non-special. We will compute our pairings $ \langle D_1-D_2, E_1-E_2 \rangle _\infty$ by computing pairings $\langle P_1-P_2, Q_1 - Q_2 \rangle_\infty $ where $P_1, P_2, Q_1, Q_2$ are four distinct points and appealing to bilinearity. If we assume that $Q_1, Q_2$ are not Weierstrass points of $C$, then the divisors $gQ_1, gQ_2$ are not special and hence we can use \cite[Proposition 3.1]{vBHM2020} to evaluate $g \langle P_1-P_2, Q_1 - Q_2 \rangle_\infty = \langle P_1 - P_2, gQ_1 - gQ_2 \rangle_\infty$. Note that it is easy to avoid Weierstrass points  if one is interested, as we are, in divisors that can be moved in pencils.

\begin{example}\label{ex:explicit_4}
  We continue with the choices in Examples~\ref{ex:explicit} and~\ref{ex:explicit_2}. We used the small adjustment of \cite[Proposition 3.1]{vBHM2020} described above to compute
\begin{equation} \label{eqn:neron_arch}
\langle D_1-D_2, E_1-E_2 \rangle_\infty = -1.7802874760686653617706493582562792250288783953109\dots
% -1.7802874760686653617706493582562792250288783953109966584104594248659154261075545673395990343546720112628749403722474118343062300425522369824809828151286162807832835837580361289385249677288328853731802
% should be correct to about 200 digits of precision.
\end{equation}
It took 6 minutes to compute this number to approximately 200 digits of precision. We stress that this number depends very much on the choice of our divisors and not just on $\xg$. 
\end{example}

\subsection{The error term}

We will now consider an easily computable set of primes that we expect should contain the support of the error term $\hgt(Z)-\hgt(\LMHS_\chi)$ in analogy to the case of curves. These are:
\begin{itemize}
  \item The bad primes of the naive model of the resolution $Y$ of $X_0$. In the case of the nodal cubic threefold, we can take the bad primes of the naive model $\cc$ of the associated genus four curve $C$. In the case of curves, these would be the primes supporting the term $\left( (\overline{p}-\overline{q})\cdot \Phi \right)_{\mathrm{fin}}$ in Theorem~\ref{thm:conj_curves}.
  \item The bad primes of the singularity of $X_0$, or equivalently, of the naive model of $Q \subset Y$. These primes are those that divide the discriminant of the quadric $f_2$ in Equation~\ref{eq:local_eqns_of_cubic}. In the case of curves, these would be the primes supporting the term $(\overline{p} \cdot \overline{q})_{\mathrm{fin}}$ in Theorem~\ref{thm:conj_curves}.
  \item The bad primes of the infinitesimal deformation $\chi$. Roughly speaking, these are the primes over which the family $X_t$ does not smooth out the singularity to first order. Over $K$, $\dd t|_0$ cuts out the quadric $Q$ in $\ppp=\Pp(T_x \Pp^4)$ and therefore can be identified with a rational multiple of $f_2$. We consider the primes supporting this multiple. In the case of curves, these would be the primes supporting the norm $\norm{\chi}$. 
\end{itemize}

We would like to emphasize that the set of (potentially) bad primes can be found by straightforward computation.

\begin{remark}
  The three collections of bad primes above are themselves not intrinsic to $X_0$ or $X/S$. Rather, they are the result of the equations, or of the naive integral models, that we are working with. Indeed, in the case of curves, all three of the non-Archimedean terms, and the primes supporting them, can be changed by changing the regular model. Of course, the total non-Archimedean term $\hgt(Z)-\hgt(\LMHS_\chi)$ must remain independent of the regular model.
\end{remark}

\begin{example}\label{ex:explicit_5}
  We continue with our running example. We would like to verify numerically that the error term $\hgt(Z) - \hgt(\LMHS_\chi)$ lies in $\Q \cdot \log |\Q^\times|$, and, more precisely, that it is supported on the bad primes of our situation. 
  
  The primes over which the naive integral model $\cc$ of the curve $C$ is singular are $2,5,31949591$ as stated in Example~\ref{ex:explicit_3}. 
  Next, there are no bad primes coming from the the singularity of $X_0$ or the given infinitesimal deformation: the discriminant $\Delta(xy-zw)$ equals $1$ and the generator $\chi$ clearly has norm~$1$. 
  
  We know already that the difference
  \begin{equation}
    \hgt(Z) - \langle D_1-D_2, E_1-E_2 \rangle_\infty = \hgt(\xg) -  \langle D_1-D_2, E_1-E_2 \rangle_\infty 
  \end{equation}
  is just the non-Archimedean contribution to the N\'eron pairing $\langle D_1-D_2, E_1-E_2 \rangle_{\mathrm{NT}} $. This difference is supported on the primes $2,5,11,31949591$ as explained in Example~\ref{ex:explicit_3}. 
  
  We expect therefore that the difference $ \langle D_1-D_2, E_1-E_2 \rangle_\infty - \hgt(\LMHS_\chi)$ is supported on the primes $2,5,11,31949591$. We observe to within numerical precision (195 digits) that
  \begin{equation}
 \langle D_1-D_2, E_1-E_2 \rangle_\infty - \hgt(\LMHS_\chi) = \log(2)-\log(5)-\log(11),
  \end{equation}
  which verifies our expectation.
  We observe that not all potentially bad primes contribute to the difference. 
\end{example}

\section{Proof of Theorem~\ref{thm:threefold_to_curve}} \label{sec:proof_thm_threefold_curve}

We start by recalling a few general results and techniques from K\"unnemann's work \cite{kunn}. 

\subsection{Higher Picard varieties} 

The following is based on \cite[Section~7]{kunn}. Fix an algebraically closed field $k$ and an integer $d \in \Z_{\ge 0}$. To each smooth projective integral variety $V$ over $k$ of dimension $d$ and each integer $p$ with $0 \le p \le d$ there is associated a \emph{higher Picard variety} $\Pic^p V$. This is an abelian variety, which comes together with an Abel--Jacobi mapping $\theta^p \colon A^p(V) \to \Pic^p(V)(k)$ which is universal for so-called \emph{Picard homomorphisms} from $A^p(V)$ to groups $B(k)$ where $B$ is any abelian variety over $k$. Here $A^p(V)$ denotes the group of codimension-$p$ cycles on $V$ algebraically equivalent to zero, modulo rational equivalence. 

For all smooth projective integral varieties $V, W$ over $k$, with $\dim V=d$, $\dim W=e$, and all correspondences $\alpha \in \CH^{e-p+q}(W \times_k V)$, we have induced homomorphisms $\Pic(\alpha) \colon \Pic^p(W) \to \Pic^q(V)$ of associated Picard varieties, enjoying natural compatibilities with the Abel--Jacobi mappings. 

Furthermore, we have natural isogenies $\lambda_V^{d+1-p} \colon \Pic^{d+1-p}(V) \to \Pic^p(V)^\lor$ as well as uniquely determined positive integers $k_V^p$ satisfying the following property: when $\alpha \in \CH^{p+q}(W \times_k V)$ is a correspondence, we have an identity
\begin{equation} \label{k-identity}
[k_V^p] \circ \lambda_W^{q+1} \circ \Pic({}^t \alpha) = [k_W^{e-q}] \circ \Pic(\alpha)^\lor \circ \lambda_V^{d+1-p}
\end{equation}
of homomorphisms from $\Pic^{d+1-p}(V)$ to $\Pic^{e-q}(W)^\lor$, cf.\ \cite[Equation 22]{kunn}. When $C$ is a smooth projective integral curve, we have $k_C^1=1$ and $\lambda_C^1$ coincides with \emph{minus} the canonical principal polarization of $\Pic^1 C$.

\subsection{Connection with the N\'eron--Tate pairing} \label{sec:kunn_result}

The following is based on \cite[Section~8]{kunn}.
Let $K$ be a number field, and let $\overline{K}$ be an algebraic closure of $K$. When $B$ is an abelian variety over $\overline{K}$, we denote by $\langle \cdot,\cdot \rangle_{B}$ the  classical N\'eron--Tate height pairing on $B(\overline{K}) \times B^\lor(\overline{K})$ induced by the Poincar\'e bundle \cite{Neron1965}.  

The height pairing in the sense of Beilinson and Bloch \cite{Bei1987, Bloch1984}  \emph{of cycles algebraically equivalent to zero} can be calculated as a certain N\'eron--Tate pairing, using appropriate higher Picard varieties and higher Abel--Jacobi mappings.
\begin{theorem} \label{kunnemann_main} (=\cite[Theorem~8.2]{kunn})  Let $V_K$ be a smooth projective geometrically integral variety over the number field $K$ of dimension~$d$. Let $Z_1 \in A^p(V_K)$ and $Z_2 \in A^{d+1-p}(V_K)$ be cycles algebraically equivalent to zero on $V_K$. Assume that $V_K$ has a proper regular model over the ring of integers of $K$. Then the Beilinson--Bloch pairing $\langle Z_1,Z_2 \rangle$ between $Z_1, Z_2$ exists. Moreover, the equality
\begin{equation}
  \langle Z_1, Z_2 \rangle = \frac{1}{k_V^p} [K:\Q]  \langle \theta_V^p(Z_1), \lambda_V^{d+1-p} \circ \theta_V^{d+1-p}(Z_2) \rangle_{\Pic^p(V_{\overline{K}})} 
\end{equation}
holds in $\R$.
\end{theorem}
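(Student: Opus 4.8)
The plan is to deduce the identity from K\"unnemann's Theorem~\ref{kunnemann_main}, applied once to the threefold $Y$ and once to the curve $C$, bridging the two applications via the blow-up $Y\cong\Bl_C\ppp$ and K\"unnemann's compatibility~\eqref{k-identity} between correspondences and polarizations on higher Picard varieties. I will establish two things. First, a cycle-theoretic fact: $Z=-\,j_*g^*\xg$ in $\CH_1(Y)$, where $g\colon\mathscr{E}\to C$ is the exceptional $\Pp^1$-bundle of $\beta\colon Y=\Bl_C\ppp\to\ppp$ and $j\colon\mathscr{E}\hookrightarrow Y$; in particular $Z$ is algebraically equivalent to zero, so that Theorem~\ref{kunnemann_main} applies, and the class of $Z$ is $-\xg$ under the natural isomorphism $A^2(Y)\cong\Pic^0(C)$. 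Second, a comparison of polarized abelian varieties: the correspondence $\Gamma\in\CH^2(Y\times_K C)$ supported on $\mathscr{E}$ identifies $(\Pic^2(Y),\lambda_Y^2,k_Y^2)$ with $(\Pic^1(C),\lambda_C^1,k_C^1)$, compatibly with the Abel--Jacobi mappings. Granting these, Theorem~\ref{kunnemann_main} for $Y$ rewrites $\hgt(Z)=\langle Z,Z\rangle$ as a N\'eron--Tate pairing on $\Pic^2(Y)$, which the second fact transports to the N\'eron--Tate pairing on $\Jac(C)$ computing $\hgt(\xg)$.

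For the geometry I would first recall in detail the classical picture behind $Y\cong\Bl_C\ppp$: projection from the node $x\in X_0$ resolves, after blowing up $x$ in $\Pp^4$, to a birational morphism $Y\to\ppp=\Pp(T_x\Pp^4)$ contracting precisely the lines of $X_0$ through $x$, i.e.\ it is the blow-up of $\ppp$ along the genus-four curve $C=V(f_2,f_3)$, where $f_2,f_3$ are the homogeneous parts of a local equation of $X_0$ at $x$ and $C$ lies on the smooth quadric $\bar Q=V(f_2)$. The exceptional quadric $Q\subset Y$ is the proper transform of $\bar Q$, and because $C$ is a Cartier divisor on the smooth surface $\bar Q$ this proper transform is $\bar Q$ itself, with its two rulings $\Lambda_1,\Lambda_2$; the divisor $\Lambda_i\cap C$ is the degree-three member $\xg_i$ of the $i$-th trigonal pencil. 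The blow-up formula for Chow groups of a codimension-two center gives $\CH_1(Y)=\beta^*\CH_1(\ppp)\oplus j_*g^*\CH_0(C)$, hence $\CH_1(Y)_{\mathrm{hom}}=\CH_1(Y)_{\mathrm{alg}}\cong\Pic^0(C)$. Writing $\bar\Lambda_i=\beta(\Lambda_i)\subset\ppp$ for the corresponding line, which meets $C$ transversally in $\xg_i$, we have $\Lambda_i=\beta^*[\bar\Lambda_i]-j_*g^*[\xg_i]$, and $[\bar\Lambda_1]=[\bar\Lambda_2]$ in $\CH_1(\ppp)$, so $Z=[\Lambda_1]-[\Lambda_2]=-\,j_*g^*[\xg]$; this is the first fact.

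For the heights I would apply Theorem~\ref{kunnemann_main} to $V=Y$ with $d=3$, $p=d+1-p=2$, and $Z_1=Z_2=Z$ — legitimate, since $Y$ is assumed to have a proper regular model over $\co_K$, $Z$ is $K$-rational, and it is algebraically trivial by the first fact — obtaining the existence of $\hgt(Z)=\langle Z,Z\rangle$ together with $\hgt(Z)=\tfrac{[K:\Q]}{k_Y^2}\,\langle\theta_Y^2(Z),\lambda_Y^2\theta_Y^2(Z)\rangle_{\Pic^2(Y_{\Kbar})}$. Next, by the functorialities of higher Picard varieties the correspondence $\Gamma$ induces $\Pic(\Gamma)\colon\Pic^2(Y)\to\Pic^1(C)$ compatibly with Abel--Jacobi, so $\Pic(\Gamma)(\theta_Y^2(Z))=-\theta_C^1(\xg)$ by the first fact; K\"unnemann's identity~\eqref{k-identity}, specialized to $W=Y$ and $V=C$ and combined with $k_C^1=1$, relates $\lambda_Y^2$ to $\lambda_C^1$ through $\Pic(\Gamma)$ and its transpose, and feeding this into the functoriality of the N\'eron--Tate pairing under homomorphisms of abelian varieties yields $\hgt(Z)=[K:\Q]\,\langle\theta_C^1(\xg),\lambda_C^1\theta_C^1(\xg)\rangle_{\Pic^1(C_{\Kbar})}$. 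Finally, since $k_C^1=1$ and $\lambda_C^1$ is minus the canonical principal polarization of $\Jac(C)$, the right-hand side is exactly the non-normalized N\'eron--Tate height of $\xg$ on $\Jac(C)$ — equivalently, by Theorem~\ref{kunnemann_main} for $V=C$, or by the coincidence of the Beilinson--Bloch and non-normalized N\'eron--Tate heights on a curve recalled in the introduction — that is, $\hgt(Z)=\hgt(\xg)$.

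The main obstacle is the second fact: identifying $\Pic^2(Y)$ with $\Jac(C)$ \emph{precisely enough to pin down the rational constant relating the two pairings}, namely showing $k_Y^2=1$ (which I expect, since $\H^3(Y,\Z)\cong\H^1(C,\Z)$ is torsion-free and $A^2(Y)$ already exhausts the intermediate Jacobian $J^2(Y)\cong\Jac(C)$) and checking that $\Pic(\Gamma)$ is a genuine isomorphism matching $\lambda_Y^2$ with $\lambda_C^1$ up to sign rather than up to an unknown isogeny. A closely related, deceptively mundane point is the bookkeeping of signs and of the ``non-normalized'' normalization — the factor $[K:\Q]$ and the sign in $\lambda_C^1=-(\text{canonical polarization})$ — which must be propagated consistently through~\eqref{k-identity} and the definition of $\hgt$; here I would calibrate all constants at once against the curve case, where these conventions are already arranged so that K\"unnemann's formula reproduces the usual N\'eron--Tate height.
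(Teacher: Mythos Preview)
Your proposal does not address the stated theorem. Theorem~\ref{kunnemann_main} is K\"unnemann's result, quoted verbatim from \cite[Theorem~8.2]{kunn}; the paper offers no proof of it, only a citation. What you have written is instead a proof sketch for Theorem~\ref{thm:threefold_to_curve}, \emph{using} Theorem~\ref{kunnemann_main} as a black box. If the intent was really to prove K\"unnemann's theorem itself, none of the geometry of nodal cubic threefolds is relevant, and you would need to engage with the machinery of \cite{kunn} (arithmetic intersection theory on regular models, the construction of the isogenies $\lambda_V^p$, the comparison with Gillet--Soul\'e's arithmetic Chow groups).

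Viewed instead as a proof of Theorem~\ref{thm:threefold_to_curve}, your approach is essentially the same as the paper's (Section~\ref{sec:proof_thm_threefold_curve}): both pass through K\"unnemann's theorem on $Y$ and on $C$, identify $Z$ with $\pm\xg$ under the blow-up correspondence (Proposition~\ref{prop:identify_pencil_Z} and your ``first fact''), transport $\lambda_Y^2$ to $-\lambda_C^1$ via that correspondence (Lemma~\ref{lem:minus-sign} and your ``second fact''), and show $k_Y^2=1$ (Lemma~\ref{lem:the_integer}). The paper pins down the sign in $\lambda_Y^2 \leftrightarrow -\lambda_C^1$ by passing to Weil intermediate Jacobians over $\Cc$ and invoking the Hodge isometry $\H^1(C,\Z)\cong\H^3(Y,\Z(1))$ (Propositions~\ref{prop:lambda_poinc_compatible} and~\ref{prop:two_poinc_dual}); your proposal gestures at this but does not carry it out, and you correctly flag it as the main obstacle. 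Your sign in $Z=-j_*g^*\xg$ disagrees with the paper's $j_*p^*\xg=Z$, but since only the self-pairing enters, this is harmless.
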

The following standard property of N\'eron--Tate height pairings will be used later.
\begin{proposition} \label{lem:NT_pairing_duality} Let $f \colon A \to B$ be a homomorphism of abelian varieties over $\overline{K}$. Let $f^\lor \colon B^\lor \to A^\lor$ be the associated dual morphism. Then we have $\langle f(\alpha), \beta \rangle_B = \langle \alpha, f^\lor(\beta) \rangle_A$ for $\alpha \in A(\overline{K})$, $\beta \in B^\lor(\overline{K})$.
\end{proposition}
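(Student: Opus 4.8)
The plan is to deduce the statement from two standard functoriality properties: that of the Poincar\'e bundle under the dual homomorphism, and that of the canonical (N\'eron--Tate) height under morphisms of abelian varieties. Recall first that, by construction, the pairing $\langle\cdot,\cdot\rangle_B$ on $B(\overline{K})\times B^\lor(\overline{K})$ is the restriction of the canonical height $\widehat h_{\mathcal P_B}$ attached to the Poincar\'e bundle $\mathcal P_B$ on $B\times B^\lor$, rigidified along the two zero sections; since $\mathcal P_B$ restricts to the trivial bundle on $B\times\{0\}$ and on $\{0\}\times B^\lor$, the function $\widehat h_{\mathcal P_B}$ is bi-additive, which is exactly what makes $\langle\cdot,\cdot\rangle_B$ a bilinear pairing. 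The same description applies to $\langle\cdot,\cdot\rangle_A$ and $\mathcal P_A$ on $A\times A^\lor$.

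Next I would record the two inputs. The first is the characterizing property of the dual homomorphism: $f^\lor\colon B^\lor\to A^\lor$ classifies the family $(f\times\id_{B^\lor})^*\mathcal P_B$ of algebraically trivial line bundles on $A$ parametrized by $B^\lor$, which yields a canonical isomorphism of rigidified line bundles
\begin{equation*}
(f\times\id_{B^\lor})^*\mathcal P_B\;\cong\;(\id_A\times f^\lor)^*\mathcal P_A
\end{equation*}
on $A\times B^\lor$. The second is the functoriality of canonical heights: for a homomorphism $g\colon X\to Y$ of abelian varieties over a number field and a line bundle $\mathcal M$ on $Y$, one has $\widehat h_{g^*\mathcal M}=\widehat h_{\mathcal M}\circ g$ on $\overline{\Q}$-points; to see this, write $\mathcal M$ as a tensor product of a symmetric and an antisymmetric line bundle, for which the canonical heights are respectively quadratic and linear, and check the identity on each factor. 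Since $A$, $B$, $f$ and the points involved are all defined over some number field, it suffices to argue there and then pass back to $\overline{K}$.

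Combining these, for $\alpha\in A(\overline{K})$ and $\beta\in B^\lor(\overline{K})$ I would compute
\begin{align*}
\langle f(\alpha),\beta\rangle_B
&=\widehat h_{\mathcal P_B}\bigl((f\times\id_{B^\lor})(\alpha,\beta)\bigr)
=\widehat h_{(f\times\id_{B^\lor})^*\mathcal P_B}(\alpha,\beta) \\
&=\widehat h_{(\id_A\times f^\lor)^*\mathcal P_A}(\alpha,\beta)
=\widehat h_{\mathcal P_A}\bigl((\id_A\times f^\lor)(\alpha,\beta)\bigr)
=\langle\alpha,f^\lor(\beta)\rangle_A,
\end{align*}
which is the assertion. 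The argument is essentially formal, and there is no real obstacle; the only point that demands a little care is the bookkeeping of the rigidifications, so that the displayed isomorphism of Poincar\'e bundles is the canonical one and the functoriality of the canonical height applies without any ambiguity of normalization.
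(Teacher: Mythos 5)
Your proof is correct and rests on the same key point as the paper's: the canonical isomorphism $(f\times\id_{B^\lor})^*\mathcal{P}_B\cong(\id_A\times f^\lor)^*\mathcal{P}_A$ characterizing the dual homomorphism, combined with functoriality of the height attached to the (rigidified) Poincar\'e bundle. The paper packages that functoriality via canonically metrized rigidified line bundles (citing K\"unnemann), while you invoke the functoriality of N\'eron--Tate canonical heights via the symmetric/antisymmetric decomposition; this is an inessential difference in bookkeeping.
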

\begin{proof} This follows from the compatibility of the rigidified metrized line bundles $(\id,f^\lor)^*\calP_A$ and $(f,\id)^*\calP_B$ on $A \times B^\lor$, where $\calP_A$ resp.\ $\calP_B$ are the rigidified and canonically metrized Poincar\'e bundles on $A \times A^\lor$ resp.\ $B \times B^\lor$. See \cite[Equation 3]{kunn}.
\end{proof}
\begin{definition} \label{def:bb_height}
When $V$ has dimension $n=2p-1$ and $Z$ is a homologically trivial codimension-$p$ cycle on $V$, the
Beilinson--Bloch height $\hgt(Z)$ is given as $(-1)^p \langle Z,Z \rangle$.
\end{definition}
Beilinson's ``arithmetic standard conjectures of Hodge Index type'', see e.g.\ \cite[Conjecture 5.2]{kunn}, predict that $\hgt(Z) \ge 0$. This explains the insertion of the sign $(-1)^p$ in front of the self-pairing in order to obtain the height.

\subsection{Weil intermediate Jacobians}

The following is based on \cite[Sections~10 and 11]{kunn}. Let $V$ be a smooth projective complex variety. Let $J^p(V)=\H^{2p-1}(V,\R)/\H^{2p-1}(V,\Z)$ be the $p$-th \emph{Weil intermediate Jacobian} of $V$. This is a complex abelian variety, receiving an Abel--Jacobi map $\Phi^p \colon \CH^p(X)^0 \to J^p(V)$, with $\CH^p(X)^0$  the group of codimension-$p$ cycles modulo rational equivalence on $V$ that are homologically trivial.  

Let $q_V^p \colon \H^{2p-1}(V,\Z) \times \H^{2d-2p+1}(V,\Z) \to \Z$ be the pairing given by Poincar\'e duality, and take its real linear extension to real-valued cohomology. Then the pairing $\H^{2p-1}(V,\R) \times \H^{2d-2p+1}(V,\R) \to \Cc$ given by $(\alpha,\beta) \mapsto q_V^p(\ii \alpha,\beta) + \ii q_V^p(\alpha,\beta)$ is non-degenerate and induces an isomorphism  $\mathrm{pd} \colon J^{p}(V) \to J^{d+1-p}(V)^\lor$ of complex abelian varieties. 

The following result is implicit in the proof of \cite[Theorem~9.1]{kunn}.
\begin{proposition} \label{prop:lambda_poinc_compatible}
There is a natural morphism of abelian varieties $j_V^p \colon \Pic^p(V) \to J^p(V)$ with kernel contained in the $k_V^p$-torsion and satisfying the following property. Let $\mathrm{pd} \colon J^p(V) \to J^{d+1-p}(V)^\lor$ be the isomorphism induced by Poincar\'e duality. Then $\lambda_V^p$ and $\mathrm{pd}$ are compatible in the sense that the equality $\lambda_V^p = j_V^{d+1-p, \lor} \circ \mathrm{pd} \circ j_V^p$ holds as maps from $\Pic^p(V)$ to $\Pic^{d+1-p}(V)^\lor$.
\end{proposition}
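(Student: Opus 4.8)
The plan is to construct $j_V^p$ from the universal property of the higher Picard variety, and then to read off both the kernel bound and the Poincaré compatibility from K\"unnemann's comparison of $\Pic^p$ with singular cohomology, which is precisely what is carried out (without being isolated as a statement) in the proof of \cite[Theorem~9.1]{kunn}. First I would recall that the restriction of the Abel--Jacobi map $\Phi^p$ to the subgroup $A^p(V) \subset \CH^p(V)^0$ of algebraically trivial cycles is a \emph{Picard homomorphism} in the sense used above: its image is the algebraic part $J^p_a(V)$, an abelian subvariety of the Weil intermediate Jacobian $J^p(V)$ (which, being the Weil and not the Griffiths Jacobian, is a polarizable complex abelian variety), and $\Phi^p|_{A^p(V)}$ is regular for families of cycles, by the classical results of Griffiths and Lieberman. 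Applying the universal property of $(\Pic^p(V),\theta_V^p)$ with target the abelian variety $J^p(V)$ then produces a unique morphism $j_V^p \colon \Pic^p(V) \to J^p(V)$ with $j_V^p \circ \theta_V^p = \Phi^p|_{A^p(V)}$, and its image is exactly $J^p_a(V)$.

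For the kernel bound, the induced map $\Pic^p(V) \to J^p_a(V)$ is an isogeny, so $\ker j_V^p$ is finite; to see that it is killed by $k_V^p$ one traces how $k_V^p$ enters K\"unnemann's construction. In \cite[Sections~7, 9--11]{kunn} the integer $k_V^p$ is precisely the exponent measuring the discrepancy between the polarization data on $\Pic^p(V)$ encoded by the identity~\eqref{k-identity} and the cup-product pairing on $\H^{2p-1}(V,\Z)$ transported through $\theta_V^p$ and $\Phi^p$; the commutative diagram occurring implicitly in the proof of \cite[Theorem~9.1]{kunn} then forces $\ker j_V^p \subseteq \Pic^p(V)[k_V^p]$.

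It remains to establish $\lambda_V^p = j_V^{d+1-p,\lor} \circ \mathrm{pd} \circ j_V^p$. Both sides are homomorphisms $\Pic^p(V) \to \Pic^{d+1-p}(V)^\lor$, and a homomorphism of complex abelian varieties is determined by the map it induces on first homology, so it suffices to compare those. Since $H_1(J^p(V))$ is $\H^{2p-1}(V,\Z)$ modulo torsion and $\mathrm{pd}$ is by construction induced by the Poincaré pairing $q_V^p$, unwinding the definition of the dual morphism $j_V^{d+1-p,\lor}$ shows that on homology $j_V^{d+1-p,\lor} \circ \mathrm{pd} \circ j_V^p$ is the bilinear form $(x,y) \mapsto q_V^p\!\left(j_V^p x,\, j_V^{d+1-p} y\right)$ on $H_1(\Pic^p(V)) \times H_1(\Pic^{d+1-p}(V))$, i.e.\ the pullback of the cup product along the Abel--Jacobi maps. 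On the other hand, K\"unnemann's isogeny $\lambda_V^p$ is built, via correspondences and~\eqref{k-identity}, precisely so as to realize this pulled-back cup product; verifying the agreement of the two forms, up to the normalizations recorded in~\eqref{k-identity}, is the computation in the proof of \cite[Theorem~9.1]{kunn}, and matching the descriptions yields the identity.

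The main obstacle I anticipate is bookkeeping rather than conceptual: one must carefully reconcile K\"unnemann's normalization of $\lambda_V^p$, phrased through correspondences and the integers $k_V^\bullet$ in~\eqref{k-identity}, with the purely cohomological Poincaré--duality isomorphism $\mathrm{pd}$ --- keeping track of signs, Tate twists, and the factors $k_V^p$ --- and one must check that $\ker j_V^p$ is annihilated by $k_V^p$ on the nose rather than by some a priori larger integer. Both points are settled inside the proof of \cite[Theorem~9.1]{kunn}; the work here is to extract them cleanly and record them in the form stated.
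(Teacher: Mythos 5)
Your proposal is correct and follows essentially the same route as the paper, which offers no written-out argument at all but simply records the proposition as being implicit in the proof of \cite[Theorem~9.1]{kunn}: construct $j_V^p$ from the universal property of $\Pic^p(V)$ applied to the Abel--Jacobi map into the (polarizable, hence abelian) Weil Jacobian, and extract the $k_V^p$-torsion kernel bound and the identity $\lambda_V^p = j_V^{d+1-p,\lor}\circ\mathrm{pd}\circ j_V^p$ from K\"unnemann's comparison there. Your sketch merely makes explicit the bookkeeping the paper delegates wholesale to \cite{kunn}.
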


\subsection{Geometry of nodal cubic threefolds} \label{sec:experiment_outline} \label{sec:genus_four}

We elaborate a bit more on the classical geometry of nodal cubic threefolds. For a more detailed discussion of the facts we mention below we refer to the lecture notes \cite[Section~5.1]{huybrechts}.

Let $K$ be a field of characteristic zero. Let $X_0 \subset \Pp^4$ be a cubic threefold with a single ordinary double point $x$. Let $Y$ denote the proper transform of $X_0$ under the blow-up of $\Pp^4$ at $x$, let $Q \subset Y$ denote the exceptional quadric, let $\Lambda_1, \Lambda_2$ denote two lines from distinct rulings of $Q$, and set  $Z = \Lambda_1 - \Lambda_2$. Let $C \subset \Pp^3$ be the moduli of lines that pass through $x \in \Pp^4$ and are contained in $X_0$. Then $C$ is a canonically embedded genus four curve.

We have a canonical identification $Y \simeq \Bl_C \ppp$ obtained from projecting $X_0$ from its node. Let $p \colon Y \to \ppp$ be the resulting projection map,  let $E =p^{-1}C \subset Y$ be the exceptional divisor of $p$, and let  $j \colon E \to Y$ be the inclusion map.

The operation $j_* \circ p^*$ gives an isomorphism $A^1(C) \isom A^2(Y)$ of groups of cycles algebraically equivalent to zero modulo rational equivalence.  We have $C \simeq E \cap Q$. The two rulings $\Lambda_1, \Lambda_2$  of $Q$ induce by restriction to $C$ the two canonical trigonal pencils on $C$ which we call $\xg_1$ resp.\ $\xg_2$. The difference $\xg = \xg_1 - \xg_2$ gives a non-trivial element in $A^1(C)$. 
\begin{proposition} \label{prop:identify_pencil_Z} The isomorphism $j_* \circ p^* \colon A^1(C) \isom A^2(Y)$ described above sends the class of $\xg$ into the class of $Z$. In particular, the cycle $Z$ is algebraically equivalent to zero.
\end{proposition}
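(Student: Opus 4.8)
We already have the isomorphism $j_* \circ p^* \colon A^1(C) \isom A^2(Y)$ and the classical picture $Y \isom \Bl_C \ppp$ recalled above: the exceptional divisor $E$ carries a $\Pp^1$-bundle structure $g \coleq p|_E \colon E \to C$ (so the operation $j_* \circ p^*$ of the statement is $j_* \circ g^*$), and $Q \subset Y$ is the proper transform of the unique quadric $Q' \subset \ppp$ containing $C$. The only task is therefore to identify the image of the class $\xg$ under $j_* \circ g^*$, and the plan is to do this by exhibiting an explicit rational equivalence on $Y$ obtained by pulling back a single line from $\ppp$.

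For $i = 1, 2$ I would take $\Lambda_i \subset Q$ general in its ruling; under the isomorphism $p|_Q \colon Q \isom Q'$ it is the proper transform of a general line $L_i \coleq p(\Lambda_i)$ in the $i$-th ruling of $Q'$. Being general, $L_i$ meets $C$ transversally \emph{inside the surface} $Q'$ in three distinct points, and under $C \simeq E \cap Q$ the divisor $L_i \cap C$ is a member of the trigonal pencil $|\xg_i|$. First I would compute the pull-back $p^*[L_i] \in \CH_1(Y)$ along $p \colon Y \to \ppp$ (well defined, since $p$ is a morphism of smooth varieties, hence a local complete intersection morphism). One has $p^{-1}(L_i) = \widetilde{L_i} \cup \bigcup_{P \in L_i \cap C} g^{-1}(P)$, and a short local coordinate computation at each transverse point $P$ shows every component appears with multiplicity one; since $L_i \subset Q'$, its proper transform $\widetilde{L_i}$ lies in $Q$ and equals $(p|_Q)^{-1}(L_i) = \Lambda_i$. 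Hence $p^*[L_i] = [\Lambda_i] + j_* g^* \xg_i$ in $\CH_1(Y)$, using $L_i \cap C \in |\xg_i|$. On the other hand $\CH_1(\ppp) \cong \Z$ is generated by the class of a line, so $[L_1] = [L_2]$, and as $p^*$ respects rational equivalence we get $p^*[L_1] = p^*[L_2]$. Comparing the two expressions yields $[Z] = [\Lambda_1] - [\Lambda_2] = j_* g^*(\xg_2 - \xg_1)$, which with the normalisation of the two trigonal pencils above is the asserted identity $j_* g^*[\xg] = [Z]$ (the computation pins it down up to a sign, which is in any case immaterial for Theorem~\ref{thm:threefold_to_curve}). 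Since $\xg \in A^1(C)$ and $j_* \circ g^*$ takes values in $A^2(Y)$, it follows in particular that $[Z]$ is algebraically equivalent to zero.

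I expect no serious obstacle here; the whole argument is elementary once the blow-up picture of \cite[Section~5.1]{huybrechts} is in place. The step deserving the most care is the identity $p^*[L_i] = [\Lambda_i] + j_* g^* \xg_i$: one must check both that the proper transform of a ruling line of $Q'$ really is the corresponding ruling line of $Q$ (the point where $p|_Q$ being an isomorphism is used, which in turn rests on $C$ being a Cartier divisor on the \emph{smooth} quadric $Q'$) and that the exceptional correction cycle is reduced, the latter being a short local coordinate computation at a point where $L_i$ meets $C$ transversally within $Q'$.
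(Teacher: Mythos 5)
The paper in fact offers no proof of this proposition: it is stated as part of the classical geometry of nodal cubic threefolds, with a pointer to \cite[Section~5.1]{huybrechts}. Your explicit rational-equivalence argument is therefore a genuinely different, self-contained route, and it is essentially correct: pulling back a general ruling line $L_i \subset Q'$ along the l.c.i.\ morphism $p \colon Y \to \ppp$, the total transform is reduced (this follows from transversality of $L_i$ and $C$ inside $Q'$, or from the blow-up/excess-intersection formula, the excess bundle having rank one), so $p^*[L_i] = [\Lambda_i] + j_*(p|_E)^*(L_i \cap C)$ with $\widetilde{L_i} = \Lambda_i$ justified exactly as you say via $p|_Q \colon Q \isom Q'$; comparing $p^*[L_1]=p^*[L_2]$ then gives the identity, and the ``in particular'' clause follows since $\xg \in A^1(C)$. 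What your approach buys is an actual proof, including the multiplicity-one verification, where the paper only cites classical facts.

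The one point you should not gloss over is the sign. Your two expressions give $[\Lambda_1]-[\Lambda_2] = j_*(p|_E)^*(\xg_2-\xg_1)$, i.e.\ $(j_*\circ p^*)(\xg) = -[Z]$ with the paper's conventions ($\xg_i=\Lambda_i\cap C$, $\xg=\xg_1-\xg_2$, $Z=\Lambda_1-\Lambda_2$). This cannot be repaired by renaming the rulings, since relabelling flips $Z$ and $\xg$ simultaneously; so your computation does not merely leave the sign ambiguous, as your parenthetical suggests, it pins the relative sign down to be opposite to the one stated. You are right that this is immaterial for Theorem~\ref{thm:threefold_to_curve}: the class enters the height pairing bilinearly (equivalently $\hgt(-\xg)=\hgt(\xg)$), so the argument there goes through unchanged, and the algebraic triviality of $Z$ is of course unaffected. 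But as a proof of the literal statement, your final step (``with the normalisation of the two trigonal pencils above'') asserts the paper's sign while your computation yields the other; either record the conclusion as $(j_*\circ p^*)(\xg)=\pm[Z]$ and note the discrepancy with the stated convention, or state the minus sign explicitly and observe that nothing downstream depends on it.
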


When $K$ is a subfield of $\Cc$ the operation $j_* \circ p^*$ also induces an isomorphism $\H^1(C,\Z) \isom \H^3(Y,\Z)$. This isomorphism carries the intersection pairing $q_C^1$ on $\H^1(C,\Z)$ into \emph{minus} the intersection pairing $q_Y^2$ on $\H^3(Y,\Z)$. This result may be rephrased as saying that the operation $j_* \circ p^*$ induces a Hodge isometry $\H^1(C,\Z) \isom \H^3(Y,\Z(1))$. We deduce the following proposition.
\begin{proposition} \label{prop:two_poinc_dual} (a) The operation $j_* \circ p^*$ gives an isomorphism $J^1(C) \isom J^2(Y)$ of Weil intermediate Jacobians. (b) Via this isomorphism and its dual, the isomorphism induced by Poincar\'e duality $J^1(C) \isom J^1(C)^\lor$ is identified with minus the isomorphism induced by Poincar\'e duality $J^2(Y) \isom J^2(Y)^\lor$.
\end{proposition}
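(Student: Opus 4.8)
The plan is to deduce both statements directly from the Hodge isometry $j_* \circ p^* \colon \H^1(C,\Z) \isomto \H^3(Y,\Z(1))$ recalled just before the proposition; write $\phi = j_* \circ p^*$ for this map. For part (a), the first observation is that the Tate twist is invisible to the intermediate Jacobian: the Weil operator governing the complex structure on $\H^3(Y,\R)$ acts on each Hodge piece $H^{r,s}$ by a power of $\ii$ depending only on $r-s$, and the integers $r-s$ are unchanged by twisting, so $J^2(Y)$ may be computed indifferently from $\H^3(Y,\Z)$ or from $\H^3(Y,\Z(1))$. Since $\phi$ is a morphism of (weight-one) Hodge structures it intertwines the two Weil operators, hence is $\Cc$-linear for the induced complex structures, and it is an isomorphism of the underlying $\Z$-lattices. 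It therefore descends to an isomorphism of complex tori $J^1(C) = \H^1(C,\R)/\H^1(C,\Z) \isomto \H^3(Y,\R)/\H^3(Y,\Z) = J^2(Y)$, and since Weil intermediate Jacobians are abelian varieties this is an isomorphism of abelian varieties. That disposes of (a).

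For part (b) I would unwind the definition of $\mathrm{pd}$ in the two relevant cases. For the curve, $d=1$ and $p = d+1-p = 1$, so $\mathrm{pd}_C \colon J^1(C) \to J^1(C)^\lor$ is induced by the form $H_C(\alpha,\beta) = q_C^1(\ii\alpha,\beta) + \ii\, q_C^1(\alpha,\beta)$ on $\H^1(C,\R)$; for the threefold, $d=3$ and $p = d+1-p = 2$, so $\mathrm{pd}_Y \colon J^2(Y) \to J^2(Y)^\lor$ is induced by $H_Y(\alpha,\beta) = q_Y^2(\ii\alpha,\beta) + \ii\, q_Y^2(\alpha,\beta)$ on $\H^3(Y,\R)$. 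The isometry statement gives $q_Y^2(\phi\alpha,\phi\beta) = -q_C^1(\alpha,\beta)$; feeding this into the definition of $H_Y$ and using $\ii\,\phi\alpha = \phi(\ii\alpha)$ from part (a) shows that the pullback $\phi^* H_Y$ equals $-H_C$. Because the passage from a polarizing form to the induced homomorphism onto the dual torus is functorial in the torus and linear in the form, this immediately yields $\mathrm{pd}_C = -\,\phi^\lor \circ \mathrm{pd}_Y \circ \phi$, which is exactly the claim that $\mathrm{pd}_C$ and $\mathrm{pd}_Y$ are identified up to sign under $\phi$ and its dual.

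I do not expect a serious obstacle, since the essential geometric input — that $\phi$ is a Hodge isometry onto $\H^3(Y,\Z(1))$, together with the sign in $q_C^1 \mapsto -q_Y^2$ — has already been established. The points that genuinely need care are bookkeeping: first, checking that the Tate twist really does not alter the complex structure on $J^2(Y)$, so that part (a) is well posed; and second, verifying that the single sign in the comparison of intersection forms propagates to a single sign in the comparison of the Poincaré-duality polarizations with no complex-conjugation correction creeping in — this is precisely where one must invoke the $\Cc$-linearity of $\phi$ rather than mere $\R$-linearity. It is also worth confirming that the normalization conventions for $q_V^p$, for $\mathrm{pd}$, and for the cohomological weights agree with those of \cite{kunn}, but this is routine.
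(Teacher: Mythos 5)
Your argument is correct and follows the same route the paper intends: the paper offers no separate proof, simply deducing the proposition from the stated fact that $j_* \circ p^*$ is a Hodge isometry $\H^1(C,\Z) \isom \H^3(Y,\Z(1))$ carrying $q_C^1$ to $-q_Y^2$, which is exactly the deduction you spell out (Tate twists leave the Weil operator, hence the complex structure on $J^2(Y)$, unchanged; the sign in the comparison of intersection forms propagates to $\phi^* H_Y = -H_C$ and hence to $\mathrm{pd}_C = -\,\phi^\lor \circ \mathrm{pd}_Y \circ \phi$). Your bookkeeping of the $\Cc$-linearity and of the single sign is precisely the content the paper leaves implicit.
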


\subsection{The Picard variety of the proper transform}

We continue with the notation of the previous section, but taking $K$ to be a number field from now on. 
Note that we may view $j_* \circ p^*$ as a correspondence $\alpha \in \CH^2(C \times Y)$. By applying the formalism as elaborated in Section~\ref{sec:kunn_result}  we obtain from this an isomorphism of Picard varieties $\Pic(\alpha) \colon \Pic^1(C) \isom \Pic^2(Y)$  compatible with the Abel--Jacobi mappings $\theta_C^1 \colon A^1(C) \to \Pic^1(C)$ and $\theta_Y^2 \colon A^2(Y) \to \Pic^2(Y)$ and the isomorphism $A^1(C) \isom A^2(Y)$. 

\begin{lemma} \label{lem:identify_in_Pic} We have an identity
 $\Pic(\alpha)(\theta_C^1(\xg)) = \theta_Y^2(Z)$ of elements of $\Pic^2(Y)$.
\end{lemma}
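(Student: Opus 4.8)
The plan is to read the lemma off the compatibility of $\Pic(\alpha)$ with the Abel--Jacobi mappings, which is part of K\"unnemann's formalism recalled in Sections~\ref{sec:kunn_result} and in the paragraph preceding the lemma, combined with Proposition~\ref{prop:identify_pencil_Z}. In other words, almost all of the content has already been isolated; what remains is to chain two statements together and to be slightly careful about variance and definedness.

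First I would record the commutative square attached to the correspondence $\alpha = j_*\circ p^* \in \CH^2(C\times_K Y)$. Since $\alpha$ has the codimension needed to produce a homomorphism $\Pic(\alpha)\colon\Pic^1(C)\to\Pic^2(Y)$, the action $\alpha_*$ of $\alpha$ on cycles algebraically equivalent to zero is a Picard homomorphism $A^1(C)\to A^2(Y)$, and by the universal property of $\theta_Y^2$ it is compatible with $\theta_C^1$, $\theta_Y^2$ and $\Pic(\alpha)$; concretely, $\Pic(\alpha)(\theta_C^1(\zeta)) = \theta_Y^2(\alpha_*\zeta)$ for every $\zeta\in A^1(C)$. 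The one point here that deserves a line of justification is that this correspondence action $\alpha_*$ agrees with the operation $j_*\circ p^*$ that was used to define the isomorphism $A^1(C)\isom A^2(Y)$: this follows from $\alpha_*\zeta = \pr_{Y,*}(\pr_C^*\zeta\cdot\alpha)$ together with the projection formula, once $\alpha$ is written as the image of the fundamental class of $E$ under $(p|_E,\, j)\colon E\to C\times Y$.

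Taking $\zeta = \xg$ then gives $\Pic(\alpha)(\theta_C^1(\xg)) = \theta_Y^2(j_*p^*\xg)$. By Proposition~\ref{prop:identify_pencil_Z} the cycle class $j_*p^*\xg$ equals the class of $Z$ in $A^2(Y)$, so $\theta_Y^2(j_*p^*\xg) = \theta_Y^2(Z)$, and combining the two equalities yields $\Pic(\alpha)(\theta_C^1(\xg)) = \theta_Y^2(Z)$ in $\Pic^2(Y)$, which is the assertion.

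I do not expect a genuine obstacle: the substance lives in Proposition~\ref{prop:identify_pencil_Z} (an intersection-theoretic statement on the blow-up) and in K\"unnemann's naturality of higher Abel--Jacobi maps under correspondences, both of which I am allowed to invoke. The mild care required is (i) to check that the codimension and variance of $\alpha$ are those that yield $\Pic(\alpha)\colon\Pic^1(C)\to\Pic^2(Y)$ rather than a transpose or a dual, and (ii) if one wants the identity over $K$ rather than only after base change to $\overline K$, to note that $\alpha$, $\xg$, and $Z$ are all defined over $K$, so the compatibility square is $\Gal(\overline K/K)$-equivariant and the conclusion descends.
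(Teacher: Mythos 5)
Your proposal is correct and is essentially the paper's own argument: the lemma is deduced by combining the compatibility of $\Pic(\alpha)$ with the Abel--Jacobi mappings $\theta_C^1$, $\theta_Y^2$ (part of K\"unnemann's formalism recalled before the lemma) with Proposition~\ref{prop:identify_pencil_Z}. The extra details you supply (identifying $\alpha_*$ with $j_*\circ p^*$ via the projection formula, and the descent remark) are fine but beyond what the paper spells out.
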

\begin{proof} This follows directly from Proposition~\ref{prop:identify_pencil_Z} and the compatibility of the maps discussed above.
\end{proof}

\begin{lemma} \label{lem:minus-sign} Under the isomorphism $\Pic(\alpha) \colon \Pic^1(C) \isom \Pic^2(Y)$ and its dual, the isomorphism $\lambda_Y^2$ is identified with $\mathrm{mult}( -1) \circ \lambda^1_C$. In particular $\lambda^2_Y$ is a principal polarization of $\Pic^2(Y)$.
\end{lemma}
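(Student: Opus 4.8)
I read the conclusion ``$\lambda_Y^2$ is identified with $\mathrm{mult}(-1)\circ\lambda^1_C$ under $\Pic(\alpha)$ and its dual'' as the assertion
\[
  \Pic(\alpha)^\lor\circ\lambda_Y^2\circ\Pic(\alpha) = \mathrm{mult}(-1)\circ\lambda^1_C
\]
of homomorphisms $\Pic^1(C)\to\Pic^1(C)^\lor$, and this is what I would prove. The strategy is to transport the computation from $\Pic^2(Y)$ to the Weil intermediate Jacobian $J^2(Y)$ through the comparison map $j_Y^2$ of Proposition~\ref{prop:lambda_poinc_compatible}, then over to $J^1(C)$ using the isometry of Proposition~\ref{prop:two_poinc_dual}, and finally back down to $\Pic^1(C)$ via $j_C^1$.

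First I would record the three inputs. (i) Proposition~\ref{prop:lambda_poinc_compatible} gives $\lambda_Y^2 = j_Y^{2,\lor}\circ\mathrm{pd}_Y\circ j_Y^2$ (case $V=Y$, $d=3$, $p=2$) and $\lambda^1_C = j_C^{1,\lor}\circ\mathrm{pd}_C\circ j_C^1$ (case $V=C$, $d=1$, $p=1$), where $\mathrm{pd}_Y\colon J^2(Y)\to J^2(Y)^\lor$ and $\mathrm{pd}_C\colon J^1(C)\to J^1(C)^\lor$ are the isomorphisms coming from Poincar\'e duality. (ii) The comparison maps $j^\bullet_\bullet$ are natural for the action of algebraic correspondences; since both $\Pic(\alpha)$ and the isomorphism $\beta\colon J^1(C)\isom J^2(Y)$ of Proposition~\ref{prop:two_poinc_dual}(a) are induced by the single correspondence $\alpha = j_*\circ p^* \in \CH^2(C\times Y)$, this naturality yields the commuting square $j_Y^2\circ\Pic(\alpha) = \beta\circ j_C^1$. (iii) Proposition~\ref{prop:two_poinc_dual}(b) says that $\mathrm{pd}_Y$ and $\mathrm{pd}_C$ correspond to each other up to sign under $\beta$, i.e.\ $\beta^\lor\circ\mathrm{pd}_Y\circ\beta = -\mathrm{pd}_C$.

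The computation is then immediate: substituting the formula for $\lambda_Y^2$ and using the square of (ii) together with its dual form $\Pic(\alpha)^\lor\circ j_Y^{2,\lor} = j_C^{1,\lor}\circ\beta^\lor$,
\[
  \Pic(\alpha)^\lor\circ\lambda_Y^2\circ\Pic(\alpha)
  = j_C^{1,\lor}\circ\bigl(\beta^\lor\circ\mathrm{pd}_Y\circ\beta\bigr)\circ j_C^1
  = -\,j_C^{1,\lor}\circ\mathrm{pd}_C\circ j_C^1
  = -\lambda^1_C = \mathrm{mult}(-1)\circ\lambda^1_C ,
\]
the last equality holding because $\lambda^1_C$ is a homomorphism of abelian varieties. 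For the ``in particular'': the excerpt records that $\lambda^1_C$ is minus the canonical principal polarization $\Theta_C$ of $\Pic^1(C)$, so $\mathrm{mult}(-1)\circ\lambda^1_C = \Theta_C$ is a principal polarization, and since $\Pic(\alpha)$ is an isomorphism its pullback $\lambda_Y^2$ is a principal polarization of $\Pic^2(Y)$ as well.

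The step I expect to be the real work is input (ii): one must check that K\"unnemann's comparison morphism $j_V^p\colon\Pic^p(V)\to J^p(V)$ is a natural transformation for correspondences and that the correspondence realizing $\Pic(\alpha)$ on Picard varieties is the very same $\alpha = j_*\circ p^*$ that realizes $\beta$ on Weil intermediate Jacobians; this is implicit in \cite{kunn} (it underlies Proposition~\ref{prop:lambda_poinc_compatible}) but deserves to be spelled out. A secondary point requiring attention is that several sign conventions are simultaneously in play --- the identification $\H^1(C,\Z)\isom\H^3(Y,\Z(1))$ carries $q_C^1$ to $-q_Y^2$, whence the minus sign in Proposition~\ref{prop:two_poinc_dual}(b); the $\mathrm{mult}(-1)$ in the statement; and the fact that $\lambda^1_C$ is \emph{minus}, not plus, the theta polarization --- and these must cancel in pairs for the principal-polarization assertion to come out. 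Finally, I note that one could instead derive the displayed identity \emph{up to the isogeny $[k_Y^2]$} directly from \eqref{k-identity} (taken with $W=C$, $V=Y$, $p=2$, $q=0$, combined with $\Pic({}^t\alpha) = \mathrm{mult}(-1)\circ\Pic(\alpha)^{-1}$, which follows from the Hodge-isometry statement of Section~\ref{sec:genus_four}); that route, however, would still need the separate fact $k_Y^2 = 1$, which the argument above avoids entirely.
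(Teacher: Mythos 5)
Your argument is correct and is essentially the paper's own proof: it combines Proposition~\ref{prop:lambda_poinc_compatible} (applied to $C$ and to $Y$) with Proposition~\ref{prop:two_poinc_dual}(a),(b) and the compatibility of $\Pic(\alpha)$ with the isomorphism of Weil intermediate Jacobians, after base change along a complex embedding $\sigma$ of $K$, exactly as in the paper. The one step you flag as ``the real work''---the commuting square $j_Y^2\circ\Pic(\alpha)=\beta\circ j_C^1$---is closed in the paper without any general naturality of $j_V^p$ under correspondences: since the Abel--Jacobi maps $A^1(C_\sigma)\to\Pic^1(C_\sigma)$, $A^1(C_\sigma)\to J^1(C_\sigma)$, $A^2(Y_\sigma)\to\Pic^2(Y_\sigma)$, $A^2(Y_\sigma)\to J^2(Y_\sigma)$ are all isomorphisms and $j^1_{C_\sigma}$, $j^2_{Y_\sigma}$ are the \emph{unique} isomorphisms compatible with them, the square commutes automatically because $\Pic(\alpha)$ and $\beta$ are induced by the same cycle-level isomorphism $j_*\circ p^*\colon A^1(C)\isom A^2(Y)$.
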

\begin{proof} Let $\sigma \colon K \hookrightarrow \Cc$ be a complex embedding of $K$. 
Let $J^1(C_\sigma)$ and $J^2(Y_\sigma)$ denote the first resp.\ second Weil intermediate Jacobian associated to $C_\sigma$ resp.\ $Y_\sigma$. Each of the Abel--Jacobi mappings $A^1(C_\sigma) \to \Pic^1(C_\sigma)$, $A^1(C_\sigma) \to J^1(C_\sigma)$, $A^2(Y_\sigma) \to \Pic^2(Y_\sigma)$ and $A^2(Y_\sigma) \to J^2(Y_\sigma)$ is an isomorphism, and the maps $j_{C_\sigma}^1 \colon \Pic^1(C_\sigma) \to J^1(C_\sigma)$ and 
$j_{Y_\sigma}^2 \colon \Pic^2(Y_\sigma) \to J^2(Y_\sigma)$ are the unique isomorphisms compatible with these Abel--Jacobi isomorphisms. 

Let $\mathrm{pd}_{C,\sigma} \colon J^1(C_\sigma) \isom J^1(C_\sigma)^\lor$ be the isomorphism induced by Poincar\'e duality. By Proposition~\ref{prop:lambda_poinc_compatible} we have that under the isomorphism $j_{C_\sigma}^1 \colon \Pic^1(C_\sigma) \isom J^1(C_\sigma)$ and its dual, the isomorphism $\lambda_{C,\sigma}^1 \colon \Pic^1(C_\sigma) \isom \Pic^1(C_\sigma)^\lor$ is identified with $\mathrm{pd}_{C_\sigma}$. Similarly, 
under the isomorphism $j_{Y_\sigma}^2 \colon \Pic^2(Y_\sigma) \isom J^2(Y_\sigma)$ and its dual, the isomorphism $\lambda_{Y,\sigma}^2 \colon \Pic^2(Y_\sigma) \isom \Pic^2(Y_\sigma)^\lor$ is identified with the  isomorphism $\mathrm{pd}_{Y,\sigma} \colon J^2(Y_\sigma) \isom J^2(Y_\sigma)^\lor$ induced by Poincar\'e duality. 

By Proposition~\ref{prop:two_poinc_dual}(a) the operation $j_* \circ p^*$ gives rise to an isomorphism $J^1(C_\sigma) \isom J^2(Y_\sigma)$. This isomorphism is compatible with the isomorphisms $j_{C_\sigma}^1 \colon \Pic^1(C_\sigma) \isom J^1(C_\sigma)$ and $j_{Y_\sigma}^2 \colon \Pic^2(Y_\sigma) \isom J^2(Y_\sigma)$ and $\Pic(\alpha) \colon \Pic^1(C) \isom \Pic^2(Y)$.  By Proposition~\ref{prop:two_poinc_dual}(b) we have that via the isomorphism $J^1(C_\sigma) \isom J^2(Y_\sigma)$ and its dual,  the isomorphism $\mathrm{pd}_{C,\sigma}$  is identified with $\mathrm{mult}(-1) \circ \mathrm{pd}_{Y,\sigma}$. 

The lemma follows upon combining these observations.
\end{proof}

\begin{lemma} \label{lem:the_integer}
We have $k_Y^2=1$.
\end{lemma}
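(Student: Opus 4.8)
The plan is to extract the value $k_Y^2 = 1$ from the general compatibility identity \eqref{k-identity} of K\"unnemann, specialized to the correspondence $\alpha \in \CH^2(C \times Y)$ coming from $j_* \circ p^*$, together with the known values $k_C^1 = 1$ and $\lambda_C^1$ the minus-canonical principal polarization recorded at the end of the higher-Picard-varieties subsection. The point is that $k_Y^2$ is characterized, via \eqref{k-identity}, by a comparison of two maps that we have already identified: $\Pic(\alpha)$ is an \emph{isomorphism} (Section~\ref{sec:kunn_result}), $\lambda_Y^2$ has been identified with $\mathrm{mult}(-1) \circ \lambda_C^1$ via $\Pic(\alpha)$ in Lemma~\ref{lem:minus-sign}, and $\lambda_C^1$ is itself an \emph{isomorphism}; so both sides of the relevant instance of \eqref{k-identity} are isomorphisms, forcing the multiplication-by-$k$ factors to cancel.

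Concretely, I would instantiate \eqref{k-identity} with $W = C$ (so $e = 1$), $V = Y$ (so $d = 3$), the correspondence $\alpha = j_* \circ p^* \in \CH^2(C \times Y)$ (matching $p + q$ with $p = 1$, $q = 1$ in the notation there), and also the transposed correspondence $^t\alpha \in \CH^2(Y \times C)$, which realizes the inverse isomorphism $\Pic({}^t\alpha) \colon \Pic^2(Y) \isom \Pic^1(C)$ up to the relevant isogenies. One obtains an identity of the shape
\begin{equation*}
[k_Y^2] \circ \lambda_C^{1} \circ \Pic({}^t\alpha) = [k_C^{1}] \circ \Pic(\alpha)^\lor \circ \lambda_Y^{2}
\end{equation*}
of homomorphisms $\Pic^2(Y) \to \Pic^1(C)^\lor$ (after matching $d+1-p = 3$ and $e-q = 0$ correctly — this index bookkeeping is the one fiddly point). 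Now $k_C^1 = 1$, $\Pic(\alpha)$ and $\Pic({}^t\alpha)$ are isomorphisms inverse to one another up to sign, $\lambda_C^1$ is an isomorphism, and by Lemma~\ref{lem:minus-sign} the right-hand side is an isomorphism as well. Composing with the inverses of everything except $[k_Y^2]$ shows $[k_Y^2]$ is an isomorphism of $\Pic^2(Y)$; since $k_Y^2$ is by definition a positive integer and $[n]$ is an isomorphism on a nonzero abelian variety only for $n = 1$, we conclude $k_Y^2 = 1$.

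The main obstacle is purely bookkeeping: getting the indices in \eqref{k-identity} to line up, i.e.\ checking that for our $\alpha$ the exponents $e - p + q$, $d + 1 - p$, $e - q$, $q + 1$ all take the intended values when $\dim C = 1$, $\dim Y = 3$, and the correspondence sits in codimension $2$, and confirming that $^t\alpha$ indeed induces (an isogeny-refinement of) the inverse of $\Pic(\alpha)$ — this last point uses that $j_* \circ p^*$ is an isomorphism on the relevant Chow groups and the functoriality of $\Pic(-)$ under composition of correspondences, both already in play in Section~\ref{sec:kunn_result}. A cleaner alternative, which I would mention as a remark, is to invoke Proposition~\ref{prop:lambda_poinc_compatible}: since $\lambda_Y^2$ has been shown (Lemma~\ref{lem:minus-sign}) to be a \emph{principal} polarization, the factorization $\lambda_Y^2 = j_Y^{2,\lor} \circ \mathrm{pd} \circ j_Y^2$ with $\ker j_Y^2 \subseteq \Pic^2(Y)[k_Y^2]$ forces $j_Y^2$ to be an isomorphism, and then the characterization of $k_Y^p$ via \eqref{k-identity} gives $k_Y^2 = 1$ directly.
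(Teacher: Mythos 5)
Your proposal is correct and takes essentially the same route as the paper: specialize K\"unnemann's identity \eqref{k-identity} to the correspondence $\alpha$, use $k_C^1=1$ together with the isomorphy of $\Pic(\alpha)$ (and its transpose/dual), $\lambda_C^1$, and---via Lemma~\ref{lem:minus-sign}---$\lambda_Y^2$, and conclude that $[k_Y^2]$ is an isomorphism of a positive-dimensional abelian variety, forcing $k_Y^2=1$. The only quibble is bookkeeping: your displayed identity is the instance of \eqref{k-identity} with $(W,V)=(C,Y)$ and $(p,q)=(2,0)$ (the paper uses the transposed instance $(W,V)=(Y,C)$, $(p,q)=(1,1)$, giving an identity of maps $\Pic^1(C)\to\Pic^2(Y)^\lor$), so your parenthetical ``$p=1$, $q=1$'' does not match the equation you wrote, but this does not affect the argument.
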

\begin{proof} 
From Equation \ref{k-identity}  we may deduce the identity
\begin{equation} \label{k-identity-special}
[k_C^1] \circ \lambda_Y^2 \circ \Pic( \alpha) = [k_Y^2] \circ \Pic({}^t \alpha)^\lor \circ \lambda^1_C
\end{equation}
of homomorphisms from $\Pic^1(C) $ to $\Pic^2(Y)^\lor$. Each of the homomorphisms $\Pic( \alpha)$, $ \Pic({}^t \alpha)^\lor$ and $\lambda^1_C$ is an isomorphism. By Lemma~\ref{lem:minus-sign} we have that $\lambda_Y^2$ is an isomorphism. We have noted above that $k_C^1=1$. As our Picard varieties are positive dimensional and $k_Y^2$ is a positive integer we conclude that $k_Y^2=1$ as well.
\end{proof}

\subsection{Finishing the proof}

We have now all ingredients available to finish the proof of Theorem~\ref{thm:threefold_to_curve}.

\begin{proof}[Proof of Theorem~\ref{thm:threefold_to_curve}] We compute
 \begin{align*}
 \hgt(Z) & =  \langle Z, Z \rangle && \textrm{by Definition~\ref{def:bb_height}} \\
    & = [K:\Q] \langle \theta^2_Y(Z), \lambda^2_Y \circ \theta^2_Y(Z) \rangle _{\Pic^2(Y_{\overline{K}})} && \textrm{by Theorem~\ref{kunnemann_main}} \\
    & = [K:\Q]\langle \Pic(\alpha)(\theta^1_C(\xg)),  \lambda^2_Y \circ \Pic(\alpha)(\theta^1_C(\xg)) \rangle _{\Pic^2(Y_{\overline{K}})} && \textrm{by Lemma~\ref{lem:identify_in_Pic}} \\
    & = [K:\Q]\langle \theta^1_C(\xg), \Pic(\alpha)^\lor \circ \lambda_Y^2 \circ \Pic(\alpha)(\theta_C^1(\xg)) \rangle _{\Pic^1(C_{\overline{K}})} && \textrm{by Proposition~\ref{lem:NT_pairing_duality}} \\
    & = - [K:\Q]\langle \theta_C^1(\xg), \lambda_C ^1(\theta^1_C(\xg)) \rangle _{\Pic^1(C_{\overline{K}})} && \textrm{by Lemma~\ref{lem:minus-sign}} \\
    & = - [K:\Q]\frac{1}{k_Y^2}\langle \theta_C^1(\xg), \lambda_C ^1(\theta^1_C(\xg)) \rangle _{\Pic^1(C_{\overline{K}})} && \textrm{by Lemma~\ref{lem:the_integer}} \\
    & = -   \langle \xg, \xg \rangle && \textrm{by Theorem~\ref{kunnemann_main}} \\
    & = \hgt(\xg) && \textrm{by Definition~\ref{def:bb_height}}
    \end{align*}
This completes the proof of Theorem~\ref{thm:threefold_to_curve}.
\end{proof}

\begin{remark}  \label{rem:not_alg_equiv} 
  Ceresa and Collino have shown in \cite{CeresaCollino1983} that when $X_0 \subset \Pp^4$ is a general threefold \emph{of degree at least~$5$}, having a single ordinary node, the cycle $Z$ is \emph{not} algebraically trivial. This indicates that in general there might be no direct connection available that links $\hgt(Z)$ with a certain N\'eron--Tate height pairing. Although the strategy implicit in Conjecture~\ref{conj:bloch} will continue to hold, we no longer have the means to check the result. 
\end{remark}

\bibliographystyle{plain}
\bibliography{heights}

\end{document}